\newcommand{\bq}{\begin{equation}}
\newcommand{\eq}{\end{equation}}
\algnewcommand{\LineComment}[1]{\State \(\triangleright\) #1}
\newtheorem{theorem}{Theorem}
\theoremstyle{lemma}
\newtheorem{lemma}[theorem]{Lemma}
\newtheorem{corollary}[theorem]{Corollary}
\newtheorem{definition}[theorem]{Definition}
\newtheorem{remark}[theorem]{Remark}
\newtheorem{hypothesis}[theorem]{Hypothesis}
\theoremstyle{remark}
\newcommand\appendix@section[1]{%
\refstepcounter{section}%
\orig@section*{Appendix \@Alph\c@section: #1}%
}
\let\orig@section\section
\g@addto@macro\appendix{\let\section\appendix@section}
\begin{document}

\title[Point to Far-Field Lens]{Optimal Transport with Defective Cost Functions with Applications to the Lens Refractor Problem}

\author{Axel G. R. Turnquist}
\address{Department of Mathematics, University of Texas at Austin, Austin, TX, 78712}
\email{agrt@utexas.edu}

\begin{abstract}
We define and discuss the properties of a class of cost functions on the sphere which we term defective cost functions. We then discuss how to extend these definitions and some properties to cost functions defined on Euclidean space and on surfaces embedded in Euclidean space. Some important properties of defective cost functions are that they result in Optimal Transport mappings which map to points along geodesics, have a nonzero mixed Hessian term, among other important properties. We also compute the cost-sectional curvature for a broad class of cost functions, to verify and some known examples of cost functions and easily prove positive cost-sectional curvature for some new cost functions. Finally, we discuss how we can construct a regularity theory for defective cost functions by satisfying the Ma-Trudinger-Wang (MTW) conditions on an appropriately defined domain. As we develop the regularity theory of defective cost functions, we discuss how the results apply to a particular instance of the far-field lens refractor problem.
\end{abstract}

\date{\today}    
\maketitle

\section{Introduction}\label{sec:introduction}
Recently, much work has been done on deriving PDE formulations of freeform optics problems with lenses and reflectors, see for example the work in~\cite{YadavThesis, Wang_Reflector, Wang_Reflector2, unified, gutierrezhuang}, resulting in Optimal Transport PDE and, more generally, generated Jacobian equations posed on the unit sphere $\mathbb{S}^2 \subset \mathbb{R}^3$. The general problem is to find, given source and target light intensities modeled as probability measures $\mu \subset \mathbb{S}^2$ and $\nu \subset \mathbb{S}^2$, the function that determines the shape of a lens (or reflector) system that achieves the desired light intensity output pattern. The PDE formulations for some of these optics problems have an important interpretation in terms of Optimal Transport. On the theoretical side, some interest in the menagerie of Optimal Transport problems that arise from these optics problems stems from the rather ``exotic" nature of these cost functions in their Optimal Transport interpretation. For optical problems concerning redirecting directional light to a desired far-field intensity, the it is natural for the PDE to be posed on the sphere. Some cost functions on the sphere which appear ``exotic" actually satisfy some very natural conditions and thus lead to well-behaved Optimal Transport mappings. A regularity theory has been built for ``well-behaved" cost functions in Theorem 4.1 of Loeper~\cite{Loeper_OTonSphere}. The cost functions we address in this manuscript do not fit into the theory established by Loeper, but instead exhibit ``defects" that do not allow for mass to be transported too far on the unit sphere, hence we deem these cost functions ``defective". However, not all is lost, and defective cost functions can satisfy the Ma-Trudinger-Wang (MTW) conditions (see~\cite{MTW}) on appropriate domains.

We will pay special attention to a problem known as the far-field lens refractor problem. The setup is as follows: directional light gets redirected through a refractive medium to a desired directional target intensity. The refractive medium could have 1) refractive index less than or 2) greater than the surrounding medium, resulting in different cost functions in their Optimal Transport PDE formulation. These cases are very different on the PDE level, mainly due to the fact that in one case the cost-sectional curvature is positive and in the other, negative. However, both cost functions exhibit the same problem in that they do not allow for mass to be transported too far. Most of the early work on the refractor problem, especially proving the existence of weak solutions of the problem and computing the cost-sectional curvature was done in~\cite{gutierrezhuang} and further related optical problems (that do not have Optimal Transport PDE formulations) were developed by Gutierrez and coauthors in a series of works, for example the near-field refractor problem work in~\cite{ghnear}.

The goal of this manuscript, is twofold. First, and foremost, is a greater exploration of the specific PDE arising in the far-field lens refractor problem and its solvability and regularity, with an eye for designing future numerical discretizations which ideally should take into account the solvability condition and yield provably convergent solutions even when the solution of the PDE are known to be ``weak" viscosity solutions. Secondly, our goal is to develop a theory defective cost functions to facilitate the analysis of such Optimal Transport cost functions arising in optics problem. We can characterize many cost functions by in which direction the mass transports and how far the mass is ``allowed" to travel given some structure on the cost function. Defective cost functions will transport mass along geodesics and will not allow mass to be transported beyond a certain distance. Thus, this manuscript is also an exploration into the structure of such cost functions and thus many of the results in this paper do not exclusively apply to the cost function arising from the lens refractor problem, but apply in greater generality. This allows for the construction of a regularity theory that does not rely on arguments involving ellipsoids or hyperboloids of revolution, which was the approach taken by~\cite{gutierrezhuang} and~\cite{Karakhanyan}, but rather follows in the footsteps of the line of work by Loeper in~\cite{LoeperReg}, and especially the paper~\cite{Loeper_OTonSphere}, which allows for the theory to apply to a wide variety of cost functions. As a byproduct of our work, for a large class of cost functions, we have discovered more fundamental conditions which will allow for Theorem 4.1 of Loeper's work~\cite{Loeper_OTonSphere} to apply, as opposed to posing the MTW conditions as an assumption in that theorem. This allows, then, for a much easier way for a researcher working on applications to verify the hypotheses of Theorem 4.1 of Loeper for many cost functions, since the computations are done in full in this manuscript. An example of using the theorems in this manuscript to verify the MTW conditions is shown in Section~\ref{sec:MTW}.

In this paper, our goal is to analyze solvability and regularity of such PDE for a class of cost functions which we will call defective cost functions. We desire to prove that smooth enough data lead to smooth solutions. In Section~\ref{sec:background}, we review the setup of the lens refractor problem, which serves as an example to motivate the types of cost functions we will be considering.  Section~\ref{sec:computations} contains the core computations and definitions of this paper. We motivate the definition of defective cost functions via a computation that shows a solvability condition in the lens refractor problem. We then show an example where the lens refractor problem cannot be solved. We then introduce the conditions required to have cost functions leading to exponential-type maps for Optimal Transport problems on $\mathbb{S}^2 \subset \mathbb{R}^3$, $\mathbb{R}^d$ and, surfaces $M \subset \mathbb{R}^3$. We introduce a function $\beta$, which characterizes how far the mapping moves along a geodesic. We then introduce the definition of a defective cost function, which is a cost function that changes concavity as a function of the Riemannian distance. We show that this is the condition for the lens refractor problem that leads to the solvability condition observed earlier. Using these definitions and some useful formulas, we show how defective cost functions have nonzero mixed Hessian determinant on the sphere $\mathbb{S}^2$ and $\mathbb{R}^d$. We end this section with a computation of the cost-sectional curvature for defective cost functions on $\mathbb{S}^2$ and $\mathbb{R}^d$ and apply the results to some known examples, confirming previous works in the literature. In Section~\ref{sec:regularity}, we first show how defective cost functions satisfy the MTW conditions, with the caveat that they must also, independently, satisfy the cost-sectional curvature condition. We then show a computation of power cost functions on the unit sphere and how they satisfy every condition in Theorem 4.1 of Loeper~\cite{Loeper_OTonSphere}, except for the strict cost-sectional curvature condition, unless we are dealing with the squared geodesic cost. We then show how one can control the distance the Optimal Transport mapping moves mass by imposing some controls on the source and target density. This allows us to build a $C^{\infty}$ regularity theory for defective cost functions following the outline of the regularity theory in Loeper~\cite{Loeper_OTonSphere}. In Section~\ref{sec:conclusion} we summarize the contributions of the paper.

\section{Preliminaries}\label{sec:background}
In this section, we first introduce the far-field lens refractor problem, in order to introduce a known physical application where defective cost functions arise. The definition of defective cost functions appears in Definition~\ref{def:defective}. We then show a simple example where the lens refractor problem cannot be solved and argue in simple terms why this should make sense given the physical setup of the problem.

\subsection{Far-Field Lens Refractor Problem}\label{sec:P2FFL}




\subsubsection{Interior and Exterior Media Separated By Outer Lens Edge}\label{sec:lens2}

The far-field refractor problem is described in~\cite{gutierrezhuang} as follows. Light radiating from a point in an inner medium with refractive index $n_{I}$ goes in the direction $x$ and then passes through a lens with refractive index $n_{O}$, whose interior edge is a sphere and whose outer edge is described by the function $\mathcal{L} = u_1(x)x$ and enters the outer medium also with refractive index $n_{O}$ in the direction $y$. Light radiating from the origin has a radial intensity pattern $f(x)$. After passing through the lens, the far-field intensity pattern is given by $g(y)$, see Figure~\ref{fig:lens2}. The inverse problem, as in Section~\ref{sec:lens1} is to solve for the function $u_1(x)$ given $f$ and $g$.

\begin{figure}[htp]
	\centering
	\includegraphics[width=0.7\textwidth]{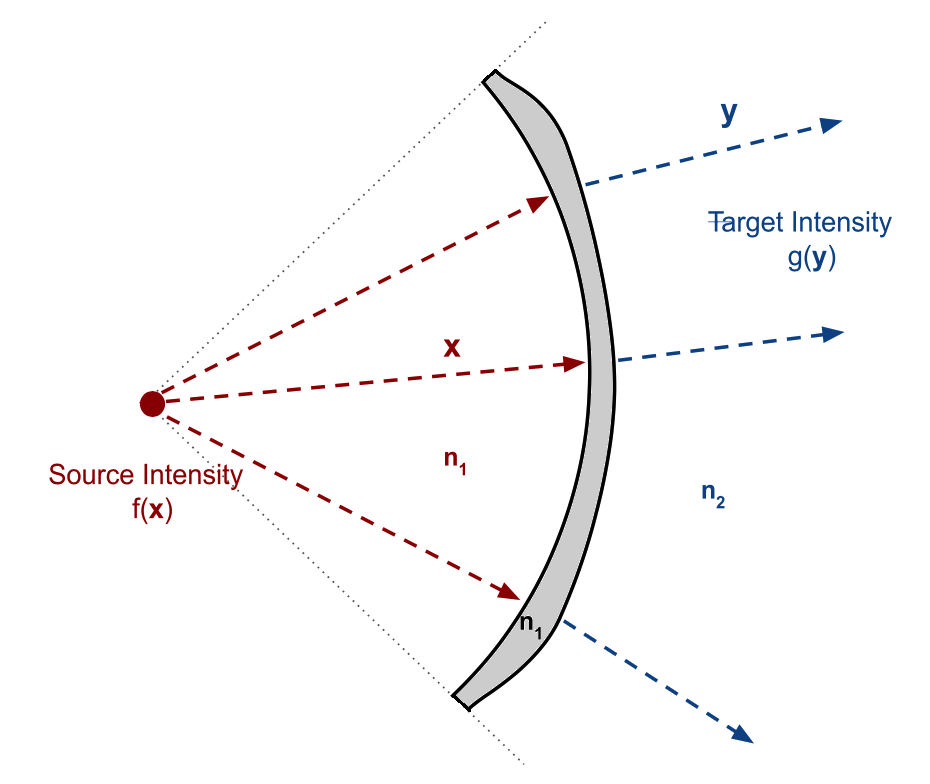}
	\caption{Light with radial intensity $f(x)$ travels in the direction $x$ in a region of refractive index $n_{I}$, enters the lens with refractive index $n_{O}$, whose inside edge is spherical and outer edge is given by the function $\mathcal{L} = u_1(x)x$, then exits the lens into a region of refractive index $n_{O}$ in the direction $y$ making a far-field intensity pattern $g(y)$.}
	\label{fig:lens2}
\end{figure}

For reference, the index of refraction of a vacuum is $1$, water is $1.333$, glass is $1.52$, and diamond is $2.417$. These representative values will be used throughout the paper. The setup of the lens refractor problem allows for us to define two quantities, $n=n_{I}/n_{O}$, and $\kappa = n_{O}/n_{I}$. We choose to use different notation, since the PDE formulations of each problem will be different. We will refer to the case where $n=n_{I}/n_{O}>1$ as lens refractor problem I and the case with $\kappa>1$ as lens refractor problem II.

\subsubsection{Potential and Cost Functions}

From~\cite{point2fflens}, the derivation of the PDE for the lens refractor problem I shows that the cost function is of the following form:

\begin{equation}\label{eq:lensp2ff}
c(x,y) = -\log \left( n - x \cdot y \right).
\end{equation}

Such a cost function may seem a bit unusual in Optimal Transport contexts, but it very closely resembles the cost function for the perhaps better known reflector antenna problem $c(x,y) = -\log(1 - x \cdot y)$, see~\cite{Wang_Reflector, Wang_Reflector2}. Some clear differences are, however, that the cost in~\eqref{eq:lensp2ff} is Lipschitz, whereas the cost $c(x,y) = -\log(1 - x \cdot y)$ for the reflector antenna problem is not. Also, when the magnitude of the gradient of the potential function is zero, the mapping for~\eqref{eq:lensp2ff} satisfies $T(x) = x$, whereas for the reflector antenna we get $T(x) = -x$. It should become clear as we analyze the lens problem in greater detail, then, that it does not make sense to take the limit as $n \rightarrow 1$ since we cannot talk about the lens problem ``converging" in any sense to the reflector problem. In fact, in some sense the reflector antenna cost is less problematic, even though it is not Lipschitz, and the associated regularity theory for the solution of the resulting Optimal Transport PDE was addressed in~\cite{Loeper_OTonSphere}.

From~\cite{gutierrezhuang}, the derivation of the PDE for the lens refractor problem II shows that the cost function is of the following form:

\begin{equation}\label{eq:kappacost}
c(x,y) = \log \left( \kappa x \cdot y - 1 \right).
\end{equation}

We can immediately notice a potential cause of concern. Since $\kappa>1$, it is possible on the sphere to have $x \cdot y =1/\kappa$. That means that the cost function in Equation~\eqref{eq:kappacost} is not Lipschitz. Moreover, there is an issue if in our Optimal Transport formulation, we have source and target densities which require that mass transports further than a distance $\arccos(1/\kappa)$. Surprisingly, as will be seen in Section~\ref{sec:costsectionalcurvature}, reflector antenna problem II is actually the less problematic of the two, even though at first glance its issues seem more apparent. 

\subsubsection{PDE Formulation}

From the conservation of light intensity, for any subset $A \subset \mathbb{S}^2$ where $T(A) \subset \mathbb{S}^2$ we get

\begin{equation}\label{eq:conservation}
\int_{A} f(x) dx = \int_{T(A)} g(y) dy.
\end{equation}

Solving for the mapping using the law of refraction (given the shape of the outer edge of the lens $u_1(x)$) and using the change of variables $u(x) = \log u_1(x)$ for the lens refractor problem I and the change of variables $u(x) = -\log u_1(x)$ for the lens refractor problem II, the following PDE is derived

\begin{equation}\label{eq:thePDE}
\det \left( D^2 u(x) + D^2_{xx} c(x,y) \vert_{y = T(x)} \right) = \frac{\left\vert D^2_{xy} c(x,y) \vert_{y = T(x)} \right\vert f(x)}{g(T(x))},
\end{equation}
where
\begin{equation}\label{eq:mapping}
\nabla u(x) = -\nabla_{x} c(x,y) \vert_{y = T(x)}.
\end{equation}

\section{Computations, Defective Cost Functions, Mixed Hessian Determinants, and Cost-Sectional Curvature}\label{sec:computations}
\subsection{Explicit Form of the Mapping for the Lens Reflector Problem}

For $x, y \in \mathbb{S}^2$, we initiate our study by performing an explicit computation of the mapping in terms of the gradient of the potential function $\nabla u$, which we label as $p \in T_{x}\mathbb{S}^2$. This will inform us about when the PDE~\eqref{eq:thePDE}, and hence the far-field lens refractor problem, will be unsolvable. We solve for the mapping $T = y$ via the equation $\nabla_{\mathbb{S}^2,x} c(x,y) = -p$. Let $\hat{q} = x \times \hat{p}$.

\subsubsection{Computation of Mapping and Solvability Condition for the Lens Refractor Problem I}

For $c(x,y) = -\log \left(n - x \cdot y \right)$, we get, in local tangent coordinates $(\hat{p}, \hat{q})$:

\begin{equation}
\left( \left\Vert p \right\Vert, 0 \right) = \left( \frac{-y \cdot \hat{p}}{n - y \cdot x} , \frac{-y \cdot \hat{q} }{n - y \cdot x} \right).
\end{equation}

Since $n - x \cdot y \neq 0$, we find that $y \cdot \hat{q} =0$. Thus, along with the constraint $(y \cdot x)^2 + (y \cdot \hat{p})^2 = 1$, we arrive at the following expression for the mapping $T$:

\begin{equation}\label{eq:lensmapping}
T(x,p) = x \ \frac{\sqrt{1 + (1-n^2) \left\Vert p \right\Vert^2}+n \left\Vert p \right\Vert^2}{1 + \left\Vert p \right\Vert^2} + p \ \frac{\sqrt{1 + (1-n^2) \left\Vert p \right\Vert^2}-n}{1 + \left\Vert p \right\Vert^2}.
\end{equation}

Note that $p=0 \implies T(x,0) = x$. Since the shape of the reflector satisfies $\nabla u(x) = \nabla u_1(x) e^{u_{1}(x)}$ then $p = \nabla u_1(x) = 0$ implies the lens is ``flat" with respect to the canonical spherical metric, which, of course, makes sense physically because where the lens is flat the light is not redirected but simply passes straight through.

An important observation from Equation~\eqref{eq:lensmapping} is that $\left\Vert p \right\Vert$ cannot exceed a certain threshold without making the mapping become complex valued. This consequently puts a hard constraint on how far the mapping $T(x)$ can move mass at a point $x$. We designate the quantity $p^{*} = 1/\sqrt{n^2-1}$ and any associated vector with magnitude $p^{*}$ as $p^{*}\hat{p}$ and at this value the square roots vanish in Equation~\eqref{eq:lensmapping}. Therefore,

\begin{equation}
T(x,p^{*}\hat{p}) = \frac{n}{1+(p^{*})^2} \left( x (p^{*})^2 - p^{*}\hat{p} \right),
\end{equation}

\begin{equation}
= \frac{x}{n} - \frac{\sqrt{n^2-1}}{n} \hat{p}.
\end{equation}

Thus,

\begin{equation}
T(x, p^{*} \hat{p}) \cdot x = \frac{1}{n},
\end{equation}
and thus denoting $y = T$, we see that we get the following solvability condition for the case when $n=n_{O}/n_{I} >1$:

\begin{equation}\label{eq:lensrefractorconstraint}
x \cdot y \geq \frac{1}{n}.
\end{equation}












The requirement in Equation~\eqref{eq:lensrefractorconstraint} has been noted in previous works, such as~\cite{gutierrezhuang}. Clearly, if this condition is not satisfied, we cannot find an Optimal Transport mapping that solves the PDE~\eqref{eq:thePDE} . From the results in~\cite{gutierrezhuang}, we actually see that this solvability condition is both necessary and sufficient for the existence of solutions of the PDE~\eqref{eq:thePDE} (in a weak sense). In Section~\ref{sec:defective}, we will show how we work the solvability condition into the definition of a defective cost function. 

\subsubsection{Computation of Mapping and Solvability Condition for the Lens Refractor Problem II}

For the cost function $c(x,y) = \log \left( \kappa x \cdot y - 1 \right)$, we immediately note that for the cost function to be evaluated we require that
\begin{equation}\label{eq:restriction2}
x \cdot y \geq 1/\kappa.
\end{equation}

This condition also appears when we solve for the mapping $T$. Following the same procedure as above, we derive the following expression for the mapping $T$:

\begin{equation}\label{eq:kappamapping}
T(x,p) = x\frac{\sqrt{\kappa^2+(\kappa^2- 1) \left\Vert p \right\Vert^2} + \left\Vert p \right\Vert^2}{\kappa(1 + \left\Vert p \right\Vert^2)} + p \frac{\sqrt{\kappa^2+(\kappa^2- 1) \left\Vert p \right\Vert^2} - 1}{\kappa(1 + \left\Vert p \right\Vert^2)}.
\end{equation}

By taking the limit $\left\Vert p \right\Vert \rightarrow 0$, we get $T(x) = x$. By taking the limit, $\left\Vert p \right\Vert \rightarrow \infty$, we get

\begin{equation}
\lim_{\left\Vert p \right\Vert \rightarrow \infty} T(x,p) = \frac{1}{\kappa}x + \hat{p} \frac{\sqrt{\kappa^2-1}}{\kappa},
\end{equation}
and thus $x \cdot y = \frac{1}{\kappa}$. Thus, we do not have any restrictions on the magnitude of $\left\Vert p \right\Vert$, but mass cannot be transported beyond a certain distance no matter what the potential function looks like.

\begin{remark}
What we will show later in this section is that the solvability condition from Equation~\eqref{eq:lensrefractorconstraint} and the solvability condition from Equation~\eqref{eq:restriction2}, while they look very similar, arise from two different properties of the cost function $c(x,y)$, which we will encapsulate in the definition of defective cost function in Section~\ref{sec:defective}. Both cost functions can be written as $c(x,y) = G(d_{\mathbb{S}^2}(x,y))$. The condition from Equation~\eqref{eq:lensrefractorconstraint} arises because $G''(z) = 0$ for $z = \arccos(1/n)$. The condition from Equation~\eqref{eq:restriction2} arises because $G'(z) = \infty$ for $z = \arccos(1/\kappa)$.
\end{remark}

\subsection{A Simple Example Showing Ill-Posedness}\label{sec:illposedness}

It is now apparent that for the far-field refractor problem we must have that the Optimal Transport mapping cannot move mass beyond a certain distance. This puts a hard constraint on the allowable source and target masses. Here, we give an example demonstrating a source and target intensities that do not satisfy the solvability condition. Depicted schematically in Figure~\ref{fig:dirac}, no single lens of can take the mass in red to the mass in blue, no matter how large the refractive index is. Physically, this makes sense that it would be unsolvable since the maximum angle at which the light rays can be deflected by a lens is a right angle, by taking the limit in the refractor problem I, i.e. $\lim_{n \rightarrow \infty} \arccos(1/n) = \pi/2$ from Equation~\eqref{eq:lensrefractorconstraint} or the limit in the refractor problem II, i.e. $\lim_{\kappa \rightarrow \infty} \arccos(1/\kappa) = \pi/2$ from Equation~\eqref{eq:restriction2}. However, the light intensity in Figure~\ref{fig:dirac} must transport more than halfway around the sphere.

\begin{figure}[htp]
	\centering
	\includegraphics[width=\textwidth]{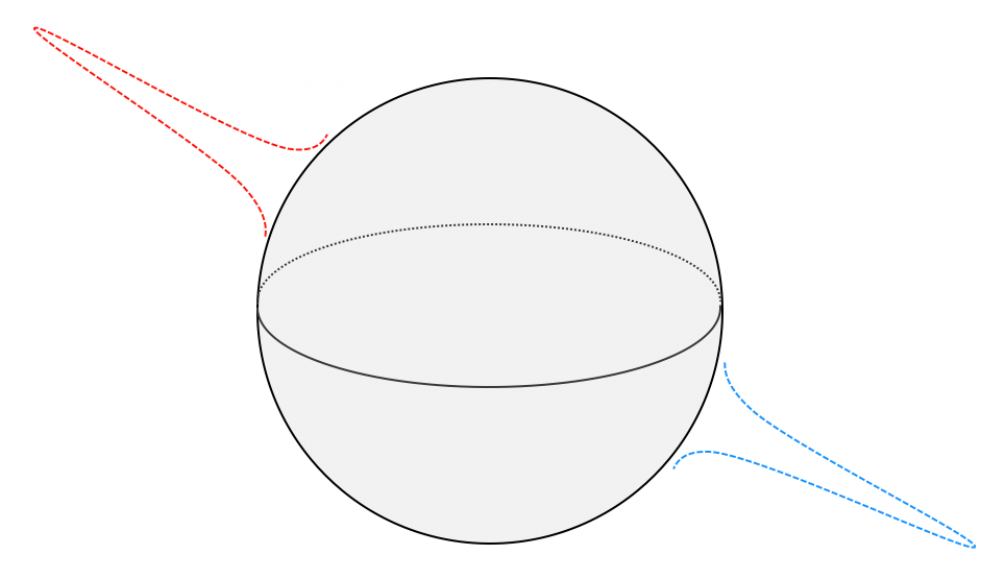}
	\caption{Schematic of source (red) and target (blue) distributions, in which the Optimal Transport PDE for any lens refractor problem is ill-posed with the cost function~\eqref{eq:lensp2ff}}
	\label{fig:dirac}
\end{figure}

Explicit examples for a continuous mapping $T$ which are unsolvable via a lens system can easily be furnished. Suppose we fix $x_0 \in \mathbb{S}^2$, then for any $x \in \mathbb{S}^2$, we define the source density:

\begin{equation}
f(x) = \frac{1}{\beta} e^{\frac{-\frac{1}{2}d^2(x, x_0)}{\sigma^2}},
\end{equation}
where $\beta$ is the normalization parameter:

\begin{equation}
\beta = 2\pi \int_{0}^{\pi} e^{-\frac{1}{2}(\theta/\sigma)^2}\sin \theta d \theta,
\end{equation}

and a target density given by:
\begin{equation}
g(x) = \frac{1}{\beta} e^{\frac{-\frac{1}{2}d^2(x, -x_0)}{\sigma^2}}.
\end{equation}
Choose $\sigma$ to be very small, for example $\sigma = 0.01$. Then, $\beta \approx 0.0006283$. In order to solve the lens refractor problem, we must satisfy the conservation of energy equation~\eqref{eq:conservation}. We choose a Borel set $B_{x_{0}}(5 \sigma)$ to be the open geodesic ball of radius $5 \sigma$ centered around $x_0$. Then:

\begin{equation}
\frac{2\pi}{\beta} \int_{0}^{0.05} e^{-\frac{1}{2}(\theta/\sigma)^2}\sin \theta d \theta \approx 0.999992.
\end{equation}

That is $99.9992\%$ of the source mass is contained within a radius $0.05$ of the point $x_0$. Likewise, for the target mass, we have $99.9992\%$ of the target mass is contained within a radius $0.05$ of the point $-x_{0}$. Thus, the condition that the mapping be mass preserving requires that the mapping $T$ satisfy:

\begin{equation}
0.999992 \approx \int_{B_{x_{0}}(0.05)} f(x)dx = \int_{T(B_{x_{0}}(0.05))} g(y)dy.
\end{equation}

By the fact that the mapping $T$ is continuous, the non-empty open set $B_{x_{0}}(0.05)$ is mapped to the non-empty open set $T(B_{x_{0}}(0.05))$. Since only $0.0008\%$ of the mass of $g$ is located outside the ball $B_{-x_{0}}(0.05)$, we must have that $T(B_{x_{0}}(0.05)) \cap B_{-x_{0}}(0.05) \neq \emptyset$ and the intersection is an open set. Thus, the continuous mapping $T$ satisfies $d_{\mathbb{S}^2}(x,T(x)) \geq \pi - 0.1$ in a non-negligible set and violates the fact that mass is not allowed to move too far. The result from~\cite{gutierrezhuang} shows, furthermore, that we cannot even expect weak solutions if the mass distributions require mass to move too far.

\subsection{Cost Functions Leading to Exponential-Type Maps}\label{sec:expo}
For the squared geodesic cost function $c(x,y) = \frac{1}{2}d_{\mathbb{S}^2}(x,y)^2$, a classical result of McCann~\cite{mccann} showed that the mapping is of the form $T(x, p) = \text{exp}_{x}(p)$. For other cost functions, like the one presented in this paper, $c(x,y) = -\log(n - x \cdot y)$, $n>1$, is it true that we can express the map as $T(x, p) = \text{exp}_{x}(\hat{p} \beta( \left\Vert p \right\Vert))$ for some function $\beta: \mathbb{R}^{+} \rightarrow \mathbb{R}$? This leads to the following definition:

\begin{definition}\label{def:expo}
We will say that a cost function leads to a map of exponential type if the Optimal Transport mapping can be expressed in the following way:

\begin{equation}
T(x) = \text{exp}_{x}\left(\frac{\nabla u(x)}{\left\Vert \nabla u(x) \right\Vert} \beta(\left\Vert \nabla u(x) \right\Vert) \right)
\end{equation}
where $u(x)$ is the potential function and $\beta$ is a continuous function on its appropriate domain.
\end{definition}

For lens refractor problems I and II, the cost functions lead to maps of exponential type. Since, Equation~\eqref{eq:lensmapping} can be expressed in the following way:

\begin{equation}\label{eq:r1r2}
T(x, p) = x R_1(\left\Vert p \right\Vert) + p R_2 (\left\Vert p \right\Vert),
\end{equation}
with $R_1(\left\Vert p \right\Vert)^2 + \left\Vert p \right\Vert^2 R_2(\left\Vert p \right\Vert)^2 = 1$, we see that $T(x, p)$ is on the great circle passing through $x$ in the direction $p$. By the formula in Equation~\eqref{eq:lensmapping}, the length of the geodesic is $\arccos R_1(\left\Vert p \right\Vert) = \arccos \left( \frac{\sqrt{1+(1-n^2)\left\Vert p \right\Vert^2} + n \gamma^2}{1 + \gamma^2} \right)= \beta(\left\Vert p \right\Vert)$. Thus, Equation~\eqref{eq:lensmapping} can be rewritten as:

\begin{equation}
T(x, p) = \text{exp}_{x} \left( \hat{p} \arccos \left( \frac{\sqrt{1+(1-n^2)\left\Vert p \right\Vert^2} + n \left\Vert p \right\Vert^2}{1 + \left\Vert p \right\Vert^2} \right) \right).
\end{equation}

For the values $n=1.333$, $n=1.52$, and $n=2.417$, typical values for the refractive indices of water, glass, and diamond, the $\beta$ functions are plotted in Figure~\ref{fig:beta}. Here we note that the ``worst" behavior we can expect from such functions is the appearance of a cusp, which is seen in the figure. Staying away from $p^{*}$, the $\beta$ functions are actually Lipschitz, see Lemma~\ref{thm:Lip} below.

\begin{figure}
\includegraphics[width=\textwidth]{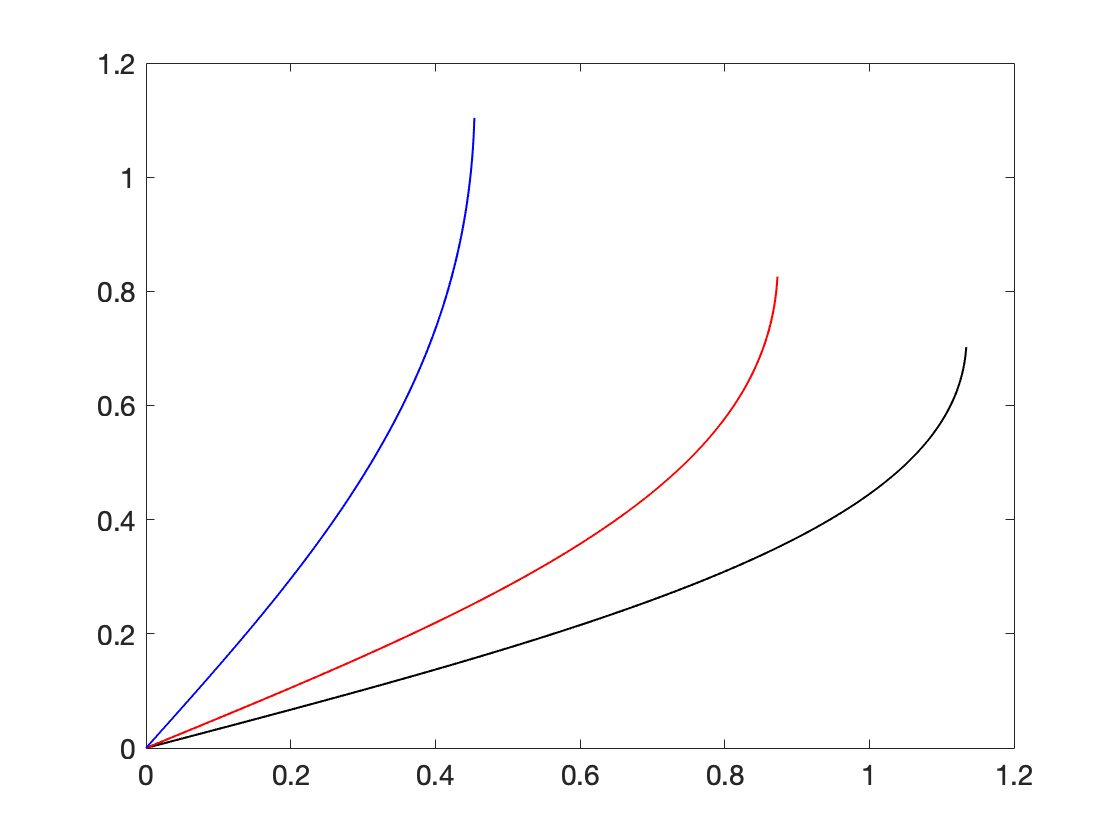}
\caption{The function $\beta$ plotted as a function of $\left\Vert p \right\Vert$. Black indicates $n=1.333$, approximately the refractive index of water, red indicates $n=1.52$, approximately the refractive index of glass, and blue indicates $n=2.417$, approximately the refractive index of diamond.}\label{fig:beta}
\end{figure}

For the cost function $c(x,y) = \frac{1}{\kappa} \log(\kappa x \cdot y - 1)$, we get the $\beta$ function:

\begin{equation}
\beta(\left\Vert p \right\Vert) = \arccos \left( \frac{\sqrt{\kappa^2 + (\kappa^2 - 1)\left\Vert p \right\Vert^2} + \left\Vert p \right\Vert^2}{\kappa(1 + \kappa^2 \left\Vert p \right\Vert^2)} \right).
\end{equation}

For the values $n=1.333$, $n=1.52$, and $n=2.417$, the $\beta$ functions are plotted in Figure~\ref{fig:beta2}. The $\beta$ functions are monotonically increasing, but flatten out as $\left\Vert p \right\Vert \rightarrow \infty$.

\begin{figure}
\includegraphics[width=\textwidth]{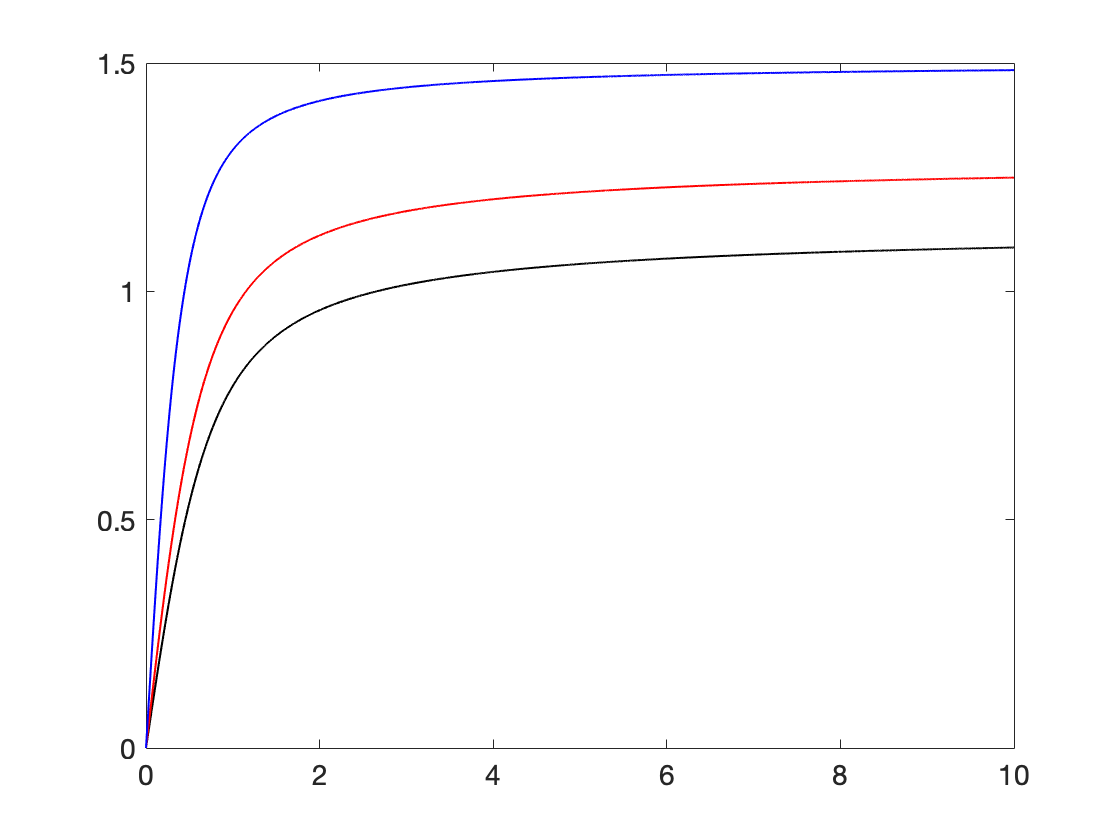}
\caption{The function $\beta$ plotted as a function of $\left\Vert p \right\Vert$. Black indicates $n=1.333$, red indicates $n=1.52$, and blue indicates $n=2.417$.}\label{fig:beta2}
\end{figure}

Thus, we say that $c(x,y) = -\log(n - x \cdot y)$ and $c(x,y) = \log(\kappa x \cdot y - 1)$ are cost functions leading to maps of exponential type. The following theorem shows when this is possible and when $\beta$ is monotone and satisfies $\beta(0) = 0$.

\begin{theorem}\label{thm:exponential}
Suppose the cost function on the unit sphere can be written as a function of the dot product between $x$ and $y$ and (consequently) the distance between $x$ and $y$, i.e. $c(x,y) = F(x \cdot y) = G(d_{\mathbb{S}^2}(x,y))$ and that $G$ is twice differentiable and $G''(z)$ is strictly positive or strictly negative on an open interval $(0,b)$ for $z \in (0, b)$ and $b\leq \pi$ and $F$ is differentiable and $F'(\zeta)\neq 0$ for $\zeta \in (\cos(b),1)$ and $\lim_{\zeta \rightarrow 1}F'(\zeta) \neq 0$. Then, the Optimal Transport mapping arising from this cost function can be written as

\begin{equation}
T(x, p) = \text{exp}_{x} \left( \hat{p} \beta(\left\Vert p \right\Vert) \right),
\end{equation}
where $\beta$ is monotonically increasing and $\beta(0) = 0$ and $\left\Vert p \right\Vert \in [0, \left\vert G'(b) \right\vert)$.
\end{theorem}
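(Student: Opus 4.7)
The plan is to decompose the mapping equation $\nabla_x c(x,y)=-p$ from~\eqref{eq:mapping} into a direction part and a magnitude part, and then invert a one-dimensional scalar relation to extract $\beta$. For the direction, I would take the tangential gradient on $\mathbb{S}^2$ of $c(x,y) = F(x\cdot y)$, obtaining $\nabla_x c(x,y) = F'(x\cdot y)\bigl(y - (x\cdot y)x\bigr)$. The bracketed factor lies in $T_x\mathbb{S}^2$ and is a positive multiple of the unit initial velocity of the minimizing geodesic from $x$ to $y$, with magnitude $\sin(d_{\mathbb{S}^2}(x,y))$. Since $F'$ is nonzero on $(\cos b, 1)$ with a single sign there (a consequence of $G''\neq 0$ on $(0,b)$ together with $G'(0)=0$, derived below), matching with $-p$ forces $p$ to be colinear with $y-(x\cdot y)x$, so $T(x)=y$ lies on the great circle through $x$ in direction $\pm\hat p$. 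This sign is constant across the relevant interval and can be absorbed into the convention for $\hat p$, giving the geodesic-direction portion of the conclusion.

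For the magnitude, I would take norms to get $\left\Vert p \right\Vert = |F'(x\cdot y)|\sin(d_{\mathbb{S}^2}(x,y))$, and then use the chain rule identity $G'(z) = -\sin(z)F'(\cos z)$ to rewrite this as $\left\Vert p \right\Vert = |G'(d_{\mathbb{S}^2}(x,y))|$. The remaining task is to invert $d \mapsto |G'(d)|$ on $[0,b)$. The fixed-sign hypothesis on $G''$ makes $G'$ strictly monotone on $(0,b)$, and $G'(0) = -\sin(0)F'(1) = 0$ (well-defined because $\lim_{\zeta\to 1}F'(\zeta)$ is nonzero and implicitly finite), so $G'$ itself has a single sign on $(0,b)$. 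Therefore $|G'| : [0,b) \to [0,|G'(b)|)$ is a strictly increasing continuous bijection, and its inverse $\beta := |G'|^{-1}$ is well-defined on $[0,|G'(b)|)$, continuous, strictly monotonically increasing, and satisfies $\beta(0) = 0$.

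Combining the direction and magnitude pieces gives $d_{\mathbb{S}^2}(x, T(x)) = \beta(\left\Vert p \right\Vert)$ with $T(x)$ on the $\hat p$-directed geodesic from $x$, which is $T(x) = \exp_x\bigl(\hat p\,\beta(\left\Vert p \right\Vert)\bigr)$ for $\left\Vert p \right\Vert \in [0,|G'(b)|)$. The calculations are otherwise routine chain rule plus a one-dimensional inverse function theorem; the only delicate place where the full strength of the hypotheses is used is the boundary behavior at $d=0$. The nonzero-limit assumption $\lim_{\zeta\to 1}F'(\zeta)\ne 0$ produces the nondegenerate linear rate $|G'(d)| \sim |F'(1)|\,d$ as $d \to 0^+$, which is what guarantees $\beta$ extends continuously to the origin with $\beta(0)=0$ (and even gives it a finite right-derivative). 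Without that hypothesis, $|G'|$ could vanish to higher than linear order at $0$, and $\beta$ could develop a vertical tangent there, breaking the continuity that allows one to describe $T$ as a smooth exponential-type map near $p=0$.
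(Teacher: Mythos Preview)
Your proposal is correct and follows essentially the same approach as the paper: compute the tangential gradient of $F(x\cdot y)$, use nonvanishing of $F'$ to force $y$ onto the geodesic through $x$ in direction $\hat p$, then reduce to the scalar relation $\|p\|=|G'(z)|$ and invert it using the fixed sign of $G''$. Your derivation of $\beta(0)=0$ via $G'(0)=-\sin(0)F'(1)=0$ is slightly cleaner than the paper's limiting argument, and your explicit comment on why the hypothesis $\lim_{\zeta\to 1}F'(\zeta)\neq 0$ matters at the origin is a nice clarification the paper leaves implicit.
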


\begin{proof}
Suppose that the cost function can be written as $c(x,y) = F(x \cdot y)$. Equation~\eqref{eq:mapping}, i.e. $p = - \nabla_{x} F(x,y)$, for some tangent vector $p$ implies that we have $(\left\Vert p \right\Vert, 0) = \left( -F'(x \cdot y)y \cdot \hat{p}, -F'(x \cdot y) y \cdot \hat{q} \right)$ expressed in the tangent coordinates $(\hat{p}, \hat{q})$ for $\hat{q} \in T_{x}\mathbb{S}^2$ where $\hat{q} \cdot \hat{p} = 0$. This then immediately shows that $0 = y \cdot \hat{q} F'(x \cdot y)$. Thus, if $F'(x \cdot y) \neq 0$ for $x \cdot y \in (\cos b,1)$, then $y \cdot \hat{q} = 0$ (when $x \cdot y = 1$ we also have $y \cdot \hat{q} = 0$ by taking the limit) and therefore, the mapping does not go in the $\hat{q}$ direction and therefore, $T$ is of the form $T(x, p) = R_1 x + R_2 p$ and hence $T$ lies on the geodesic connecting $x$ to $T$.  Thus, the equation $\left\Vert p \right\Vert = -F'(x \cdot y) y \cdot \hat{p}$ tells us that as $\left\Vert p \right\Vert \rightarrow 0$, we have either $y \cdot \hat{p} \rightarrow 0$ or $F'(x \cdot y) \rightarrow 0$. If $y \cdot \hat{p} \rightarrow 0$, since $y$ is on the great circle including $x$ in the direction $p$, then either $x \cdot y \rightarrow 1$ or $x \cdot y =\rightarrow -1$. Likewise, if $F'(x \cdot y) \rightarrow 0$, then since $F'(x \cdot y) \neq 0$ for $x \cdot y \in (\cos b, 1)$, then $x \cdot y \rightarrow 1$. Therefore, as $\left\Vert p \right\Vert \rightarrow 0$, we find that $\beta(0) = 0$ and also $T$ can be written generally as:

\begin{equation}
T(x, p) = \text{exp}_{x} \left( \hat{p} \beta(\left\Vert p \right\Vert) \right),
\end{equation} 
where $\beta(\left\Vert p \right\Vert) = \arccos R_1(\left\Vert p \right\Vert)$ and exists for $\left\Vert p \right\Vert \in [0, \left\vert G'(b) \right\vert)$. Furthermore, if $F'(x \cdot y)<0$ on $(\cos b,1)$, then $y \cdot \hat{p} >0$, and likewise, if $F'(x \cdot y)>0$ on $(\cos b,1)$, then $y \cdot \hat{p} <0$.

Since $\cos \left( d_{\mathbb{S}^{2}}(x,y) \right) = x \cdot y$, we have that the cost function can be written as a function of the geodesic distance on the sphere $c(x,y) = F(x \cdot y) = G(d_{\mathbb{S}^2}(x,y))$. Letting $z = d_{\mathbb{S}^{2}}(x,y)$, we have that:

\begin{equation}
G'(z) = -F'(x \cdot y) \sin (z),
\end{equation}
since $\cos(z) = x \cdot y$. Furthermore, since $y = (x \cdot y)x + (y \cdot \hat{p})\hat{p}$, we get that $(y \cdot \hat{p})^2 = \sin^2(z)$. If $F'(x \cdot y)<0$, then $y \cdot \hat{p} = \sin (z)$ and if $F'(x \cdot y)>0$, then $y \cdot \hat{p} = -\sin(z)$.

Therefore, we find that

\begin{align}
\left\Vert p \right\Vert &= -F'(x \cdot y) y \cdot \hat{p} = G'(z), & \text{if} \ &F'(x \cdot y)<0\label{eq:Grelation1} \\
\left\Vert p \right\Vert &= -G'(z), & \text{if} \ &F'(x \cdot y)>0.\label{eq:Grelation2}
\end{align}

Thus, we get that $\beta$ and $\pm G'$ are inverses of each other, depending on the sign of $F'(x \cdot y)$. Taking a derivative, we get

\begin{align}
\beta'(\left\Vert p \right\Vert) &= 1/G''(z), & \text{if} \ &F'(x \cdot y)<0\label{eq:Gdouble1} \\
\beta'(\left\Vert p \right\Vert) &= -1/G''(z), & \text{if} \ &F'(x \cdot y)>0.\label{eq:Gdouble2}
\end{align}
Therefore, If $G''(z)$ is strictly positive or negative, we see that $\beta(\left\Vert p \right\Vert)$ is strictly monotone and differentiable.

\end{proof}

\begin{remark}
The preceding theorem can be modified to accommodate the case $\beta(0) = \pi$, as is the case with the logarithmic cost function $c(x,y) = -\log(1-x \cdot y)$, \textit{mutatis mutandis}. However, for the sake of exposition, in this paper we focus on the case where $\beta(0) = 0$, that is, when the potential function is flat with respect to the induced metric on the sphere, the Optimal Transport mapping satisfies $T(x)=x$.
\end{remark}


The proof of Theorem~\ref{thm:exponential} yielded three important relations. The first comes from the definition of $\beta$ and the exponential map and by denoting $z = d_{\mathbb{S}^2}(x,y)$:

\begin{equation}
z = \beta(\left\Vert p \right\Vert).
\end{equation}

The second relation is contained in Equations~\eqref{eq:Grelation1} and~\eqref{eq:Grelation2} and the third is contained in Equations~\eqref{eq:Gdouble1} and~\eqref{eq:Gdouble2}. Thus, we see that $\beta$ and $G$ are related very closely.

We saw that for the cost functions for the far-field lens refractor problem led to maps of exponential type. It should also be remarked here that the squared geodesic cost function $c(x,y) = d_{\mathbb{S}^2}(x,y)^2 = \arccos (x \cdot y)^2$ and the cost function arising from the reflector antenna: $c(x,y) = -\log \left\Vert x - y \right\Vert = -\log (1 - x \cdot y)$ lead to maps of exponential type, where $\beta(\left\Vert p \right\Vert)$ is monotonically increasing and monotonically decreasing, respectively. For a real-world example of a cost function on the sphere which is does not lead to a map of exponential type, we refer to the point-to-point reflector problem, whose setup and derivation are contained in~\cite{YadavThesis}. The cost function is:

\begin{equation}\label{eq:point2point}
c(x,y) = \log \left( \frac{1}{(T^2-L^2)^2} - \frac{1-x \cdot y}{2(T^2-L^2)(T-L(x \cdot \hat{e}))(T-L (y \cdot \hat{e}))} \right), \ \ x, y \in \mathbb{S}^2,
\end{equation}
where $T, L$ are parameters arising in the optical setup satisfying $T\geq L$. $T$ is the length of the optical path and $L$ is the distance between the points. There is a special direction in the problem which is the direction from the source point to the target point, denoted by $\hat{e}$. Due to the presence of the term $x \cdot \hat{e}$ arising from this special direction $\hat{e}$, the cost function cannot be written as simply a function of $x \cdot y$. For this reason, in computing the mapping for this kind of ``exotic" cost function, we observe that $T$ has the more general form:

\begin{equation}
T(x, p) = R_1 x + R_2 p + R_3 q,
\end{equation}
where w.l.o.g. $q = x \times p$. The presence of the nonzero $R_3$ term due to the cost~\eqref{eq:point2point} indicates that $T(x, p)$ does not travel along the great circle defined by $x$ and $p$. Thus, we have an example of a cost function which leads to a mapping $T$ which is not of exponential type.

\begin{remark}\label{rmk:Euclidean}
As we just saw, for the sphere, cost functions that are functions of the Riemannian distance between $x$ and $y$ can be written as functions of the dot product $x \cdot y$ and this simplifies the exposition and computations greatly. This is the only manifold where this is true. The natural extension, then, of costs leading to maps of exponential type in Euclidean space $c(x,y): \Omega \times \Omega ' \rightarrow \mathbb{R}$, where $\Omega, \Omega ' \subset \mathbb{R}^n$ are of the form $c(x,y) = H(d(x,y))$, where $d(x,y) = \left\Vert x - y \right\Vert$. We may write such cost functions as $c(x,y) = H(d(x,y)) = J(\frac{1}{2}d(x,y)^2)$ (where now the function $J$ will play the role of the earlier function $F$) and thus, $p = -\nabla_{x} c(x,y)$ results in the following equation:

\begin{equation}\label{eq:Euclidean}
p = J' \left( \frac{1}{2}d(x,y)^2 \right)(y - x),
\end{equation}
which indicates that $y - x = \frac{p}{J'}$, which we see implies that the mapping is along a geodesic from $x$ in the direction of $p$. The generalization, then, of the condition in Theorem~\ref{thm:exponential} that $F'(\zeta) \neq 0$ is given in the Euclidean case by $J'(\zeta) \neq 0$. Furthermore, we can find an expression for $\beta ' \left( \left\Vert p \right\Vert \right)$. By taking the magnitude of Equation~\eqref{eq:Euclidean} and using the definition of $J$ and $H$, we get:

\begin{equation}\label{eq:formula1}
\left\Vert p \right\Vert = J' \left( \frac{1}{2}d(x,y)^2 \right) d(x,y) = H'(d(x,y)),
\end{equation}
and thus, by letting $z = d(x,y)$ and assuming that $H''(z) \neq 0$, we can invert this and also show that:

\begin{equation}\label{eq:relationgeneralized}
\beta'(\left\Vert p \right\Vert) = \frac{1}{H''(z)},
\end{equation}
which is a result that held in the case of the sphere as well, see Equation~\eqref{eq:Gdouble1}. This shows that the condition $H''(z) \neq 0$ is also important in the Euclidean case. Furthermore, we have $z = \beta(\left\Vert p \right\Vert)$ and $\left\Vert p \right\Vert = H'(z)$ so $\beta$ and $H'$ are inverses.
\end{remark}

\begin{remark}\label{rmk:Riemannian}
The further generalization to Riemannian manifolds can be given as follows. If $c(x,y) = H(d_{M}(x,y)) = J(\frac{1}{2}d_{M}(x,y)^2)$, then since
\begin{equation}
p = -\nabla_{x} c(x,y) = -J' \left( \frac{1}{2}d_{M}(x,y)^2 \right) \nabla_{x} \left( \frac{1}{2}d_{M}(x,y)^2 \right),
\end{equation}
we get:

\begin{equation}
\frac{p}{J' \left( \frac{1}{2}d_{M}(x,y)^2 \right)} = -\nabla_{x} \left( \frac{1}{2}d_{M}(x,y)^2 \right),
\end{equation}
and thus, by the classical results of McCann~\cite{mccann}, we get
\begin{equation}
y = \text{exp}_{x}\left( \frac{p}{J'\left( \frac{1}{2}d_{M}(x,y)^2 \right)} \right).
\end{equation}
Thus, we see generally that if the cost function can be written as a function of the Riemannian distance on a manifold $M$, we get maps of exponential type. The generalization, then, of the condition in Theorem~\ref{thm:exponential} that $F'(\zeta) \neq 0$ is given in the Riemannian case by $J'(\zeta) \neq 0$. Since the distance from $x$ to $y$ is given by the magnitude of the tangent vector in the argument of the exponential map, we can, as in the Euclidean case, derive the relation:

\begin{equation}
\left\Vert p \right\Vert = J'\left( \frac{1}{2} d_{M}(x,y)^2 \right) d_{M}(x,y) = H'(d_{M}(x,y)),
\end{equation}
and thus, assuming $H''(z) \neq 0$, we derive Equation~\eqref{eq:relationgeneralized} for the Riemannian manifold case. As in the Euclidean case, we have $z = \beta(\left\Vert p \right\Vert)$ and $\left\Vert p \right\Vert = H'(z)$ so $\beta$ and $H'$ are inverses of each other.

\end{remark}

\subsection{Defective Cost Functions}\label{sec:defective}

For some cost functions of exponential type, the mass is unable to transport beyond a certain distance. We observed this with the cost functions $c(x,y) = -\log(n-x \cdot y)$ and $c(x,y) = \log(\kappa x \cdot y - 1)$ from the far-field refractor problem. Now, we identify properties of the cost function which lead to this phenomenon and call such cost functions defective.


\begin{definition}\label{def:defective}
A cost function $c(x,y): \mathbb{S}^2 \times \mathbb{S}^2 \rightarrow 0$, which can be written $c(x,y) = F(x \cdot y) = G(d_{\mathbb{S}^2}(x,y))$, is defective if $G''(z)=0$ or $G'(z) = \infty$ for some $z \in (0,\pi)$ and $G''(z)$ is either strictly positive or negative for $z \in [0, z^{*})$, where $z^{*} \in (0, \pi)$ is the smallest value for which $G''(z) = 0$ or $G'(z) = \infty$ and $F'(\zeta) \neq 0$ on the interval $(\cos z^{*},1)$ with $\lim_{\zeta \rightarrow 1}F'(\zeta) \neq 0$. If necessary in the discussion, we will refer to cost functions for which $G''(z^{*}) = 0$ as defective cost functions of the first type, and cost functions for which $G'(z^{*}) = \infty$ as defective cost functions of the second type. We also denote $p^{*}$ as the value $p^{*} = \left\vert G'(z^{*}) \right\vert$.
\end{definition}

Defective cost functions lead to maps of exponential type provided that the distance transported is less than $z^{*}$. By the definition, defective cost functions of the first type are those for which the concavity of the cost function as a function of the Riemannian distance on $\mathbb{S}^2$ changes for some $z \in (0, \pi)$. This change of concavity of the function was a noted issue going back to work by Gangbo and McCann~\cite{geometryOT}.

Defective cost functions of the second type are where $G'(z)$ diverges to infinity as $\left\Vert p \right\Vert \rightarrow \infty$. This implies by differentiating, that $G''(z^{*}) = \infty$. Then, the equations $G'(z) = \left\Vert p \right\Vert$ and $G''(z) = 1/\beta'(\left\Vert p \right\Vert)$ imply that since $G''(z) \neq 0$, we can invert $\left\Vert p \right\Vert = G'(z)$ to get $z = \beta(\left\Vert p \right\Vert)$ and thus $\lim_{\left\Vert p \right\Vert \rightarrow \infty} \beta(\left\Vert p \right\Vert) = z^{*}$ and $\lim_{\left\Vert p \right\Vert \rightarrow \infty} \beta'(\left\Vert p \right\Vert) = 0$. So, we see that defective cost functions $c(x,y) = G(z)$ restricted to the domain $z \in [0, z^{*})$ are ``well-behaved", but do not allow for mass to be transported beyond a distance $z^{*}$.

The cost functions $c(x,y) = d_{\mathbb{S}^2}(x,y)^2$ and $c(x,y) = -\log(1 - x \cdot y)$ are not defective costs, whereas $c(x,y) = -\log(n - x \cdot y)$ is a defective cost function of the first type because $G''(z^{*}) = 0$, for $z^{*} = \arccos(1/n)$. The cost function $c(x,y) = \frac{1}{\kappa}\log(\kappa x \cdot y - 1)$ is a defective cost function of the second type, because $G''(\arccos(1/\kappa)) = \infty$. For the cost $c(x,y) = -\log(n - x \cdot y) = -\log(n-\cos(z))$, we get that $G''(z) = 0$ when $z = \arccos(1/n)$. From Equation~\eqref{eq:lensrefractorconstraint}, we see that this value of $z$ is exactly the farthest the mapping may transport while staying real-valued. For the cost function $c(x,y) = -\log(n - x \cdot y)+\log(n+1)$, we have plotted the cost function as a function of geodesic distance and denoted the inflection points $G''(z) = 0$ with circles, see Figure~\ref{fig:costs}.

\begin{figure}
\includegraphics[width=\textwidth]{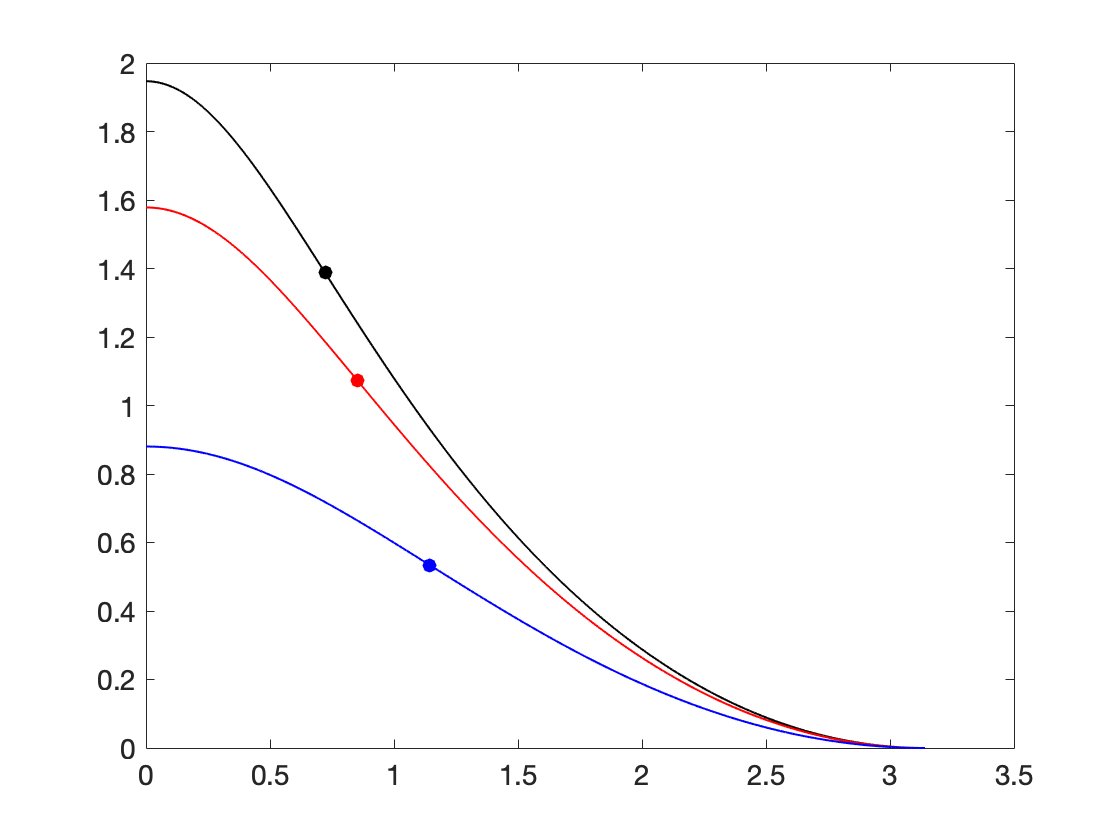}
\caption{Cost function $c(x,y) = -\log(n - \cos(z))+\log(n+1)$ plotted as a function of $z$. Black indicates $n=1.333$, approximately the refractive index of water, red indicates $n=1.52$, approximately the refractive index of glass, and blue indicates $n=2.417$, approximately the refractive index of diamond. Below the inflection points, the cost functions are concave.}\label{fig:costs}
\end{figure}


A defective cost function automatically fails to satisfy the injectivity for each $x \in \mathbb{S}^2$ of the map $y \rightarrow -\nabla_{x} c(x,y)$ over the set $\mathbb{S}^2 \setminus \left\{ -x \right\}$, which is one of the MTW conditions presented in~\cite{Loeper_OTonSphere}. It should be noted also that the cost functions in Theorem 4.1 of~\cite{Loeper_OTonSphere} lead to maps of exponential type that are not defective.

For $\left\Vert p \right\Vert \leq c < p^{*}$, we also have the important fact that the $\beta$ functions, defined in Definition~\ref{def:expo} are Lipschitz for defective cost functions below the value $p^{*}$.


\begin{lemma}\label{thm:Lip}
Suppose that there exists a $z^{*}$ such that $G''(z^{*}) = 0$. Denote by $p^{*}$ the value such that $\beta(p^{*}) = z^{*}$. The $\beta$ function arising from a defective cost function $c(x,y) = F(x \cdot y) = G\left( d_{\mathbb{S}^2}(x,y) \right)$, where $F'(\zeta) \neq 0$, is Lipschitz on the interval $[0,c]$, where $c<p^{*}$.
\end{lemma}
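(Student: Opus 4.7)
The plan is to reduce the claim to the formula $\beta'(\|p\|) = \pm 1/G''(z)$ with $z = \beta(\|p\|)$, which was derived in the proof of Theorem~\ref{thm:exponential} (Equations~\eqref{eq:Gdouble1}--\eqref{eq:Gdouble2}). Once we know $\beta$ is differentiable on $[0, c]$ with uniformly bounded derivative, the Lipschitz conclusion follows from the Mean Value Theorem.

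First I would invoke Theorem~\ref{thm:exponential}: since the cost is defective of the first type, $G''$ has a fixed (nonzero) sign on $[0, z^*)$ and $F'(\zeta) \neq 0$ on $(\cos z^*, 1)$, so the theorem applies on that interval. This yields that $\beta$ is strictly monotone increasing, differentiable on $[0, p^*)$, with $\beta(0) = 0$ and $\beta(p^*) = z^*$ (by the matching $\left\Vert p \right\Vert = |G'(z)|$ and the definition $p^* = |G'(z^*)|$). Second, because $c < p^*$ and $\beta$ is strictly increasing, its image satisfies $\beta([0, c]) = [0, \beta(c)]$ with $\beta(c) < z^*$.

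The key step is then to bound $G''$ away from zero on the compact interval $[0, \beta(c)]$. Since $G$ is twice differentiable and $G''$ is of fixed sign with $G''(z^*) = 0$ being the first such zero, continuity of $G''$ on $[0, \beta(c)] \subset [0, z^*)$ gives
\begin{equation}
m := \min_{z \in [0, \beta(c)]} |G''(z)| > 0.
\end{equation}
Substituting into~\eqref{eq:Gdouble1}--\eqref{eq:Gdouble2} yields $|\beta'(\|p\|)| = 1/|G''(\beta(\|p\|))| \leq 1/m$ for every $\|p\| \in [0, c]$, so by the Mean Value Theorem $\beta$ is Lipschitz on $[0, c]$ with constant $1/m$.

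The only subtle point is the continuity of $G''$ underlying the compactness argument; if one only assumes $G$ is twice differentiable without a $C^2$ hypothesis, the same conclusion can still be recovered since $G''$ is of fixed sign on $[0, z^*)$ and $\beta' = 1/G''$ is bounded on compact subinterval preimages through the inverse-function identity $\beta = (G')^{-1}$ (on the relevant branch), with $G'$ being $C^1$ and having nonvanishing derivative on $[0, \beta(c)]$. Apart from this mild regularity bookkeeping, the argument is routine; the real content has already been absorbed into Theorem~\ref{thm:exponential} and the defectiveness hypothesis, which guarantee that the singularity $G''(z^*) = 0$ is strictly excluded from $[0, c]$.
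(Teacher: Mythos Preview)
Your proof is correct and follows exactly the paper's approach: the paper's argument is the one-line observation that Equations~\eqref{eq:Gdouble1}--\eqref{eq:Gdouble2} give $\beta'=\pm 1/G''$, and since $z<z^{*}$ the derivative is bounded. You have simply (and appropriately) fleshed out the compactness step that makes ``bounded'' precise, so there is nothing to add.
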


\begin{proof}
This is an immediate consequence of Equation~\eqref{eq:Gdouble1} or Equation~\eqref{eq:Gdouble2}. For $z < z^{*}$, we see that the derivative of $\beta$ is bounded.
\end{proof}

The important relation in Equations~\eqref{eq:Gdouble1} and~\eqref{eq:Gdouble2} explains why the $\beta$ function for the lens refractor problem I has a cusp and why the $\beta$ function for the lens refractor problem II flattens out. Therefore, if we have a $\beta$ function with a cusp, then it arises from a defective cost function of the first type, and if the $\beta$ function limits to a horizontal asymptote, where the asymptote is at a value strictly less than $\pi$, then it arises from a defective cost function of the second type. Both are types of cost functions which do not allow the mass to be transported a distance beyond a fixed value strictly less than $\pi$.

\begin{remark}\label{rmk:defectiveEuclidean}
It should be clear how to extend the definition of a defective cost function in Euclidean space. If we have a cost function $c(x,y) = H(\left\Vert x - y \right\Vert) = J \left( \frac{1}{2}\left\Vert x - y \right\Vert^2 \right)$, then denote $z^{*}$ to be the smallest value for which $H''(z) = 0$ or $H'(z) = \infty$. Then, let $F$ satisfy $F'(\zeta) \neq 0 $ for $\zeta < \sqrt{2z^{*}}$ and $H''(z)$ is either positive or negative for $z \in [0, z^{*})$. We call such a $c$ a defective cost function.
\end{remark}

\begin{remark}
In the case of more general Riemannian manifolds, if we have a cost function $c(x,y) = H(d_{M}(x,y)) = J \left( \frac{1}{2}d_{M}(x,y)^2 \right)$, then denote $z^{*}$ to be the smallest value for which $H''(z) = 0$ or $H'(z) = \infty$. Then, let $F$ satisfy $F'(\zeta) \neq 0 $ for $\zeta < \sqrt{2z^{*}}$ and $H''(z)$ is either positive or negative for $z \in [0, z^{*})$. We call such a $c$ a defective cost function. Note that in this case, $z^{*}$ will necessarily be less than the injectivity radius.
\end{remark}

\subsubsection{Using Euclidean Distance for Computations on Manifolds}

It is perhaps of interest to the computational Optimal Transport community to investigate the possibility of using the ambient Euclidean distance in a cost function, i.e. $c(x,y) = \frac{1}{2} d_{\mathbb{R}^3}(x,y)^2$ to solve the Optimal Transport problem on a connected, orientable surface $M \subset \mathbb{R}^3$. It is tempting to use the Euclidean distance is very cheap to compute, whereas the cost function $c(x,y) = \frac{1}{2}d_{M}(x,y)^2$, by contrast, is very expensive, since pairwise geodesic distances on $M$ must be computed for every point. Here, we demonstrate that the cost function $c(x,y) = \frac{1}{2} d_{\mathbb{R}^3}(x,y)^2 = H(d_{M}(x,y))$, while monotonic (as a function of the Riemannian distance) for many manifolds, is defective for $M=\mathbb{S}^2$. We have the cost function:

\begin{equation}
c(x,y) = \frac{1}{2} d_{\mathbb{R}^3}(x,y)^2 = 2 \sin^2 \left( \frac{d_{M}(x,y)}{2} \right),
\end{equation}
so $G(z) = 2 \sin^2 \left( \frac{z}{2} \right)$ and thus $G''(z) = \cos (z) = 0$ for $z = \pi/2$. Therefore, the cost function $c(x,y) = \frac{1}{2} d_{\mathbb{R}^3}(x,y)^2$ is defective. We see what happens when we solve for the mapping. Since for this cost function we have:

\begin{equation}
p = -\nabla_{x} c(x,y) = y - x,
\end{equation}
Taking the dot product of both sides with respect to $p$ and $q$, respectively, yields:

\begin{align}
\left\Vert p \right\Vert &= y \cdot p, \\
0 &= y \cdot q.
\end{align}

Using the fact that $y = (y \cdot p)\hat{p} + (y \cdot x)x$, we get:

\begin{equation}
y = p + \sqrt{1- \left\Vert p \right\Vert^2}x.
\end{equation}

We thus we that $\left\Vert p \right\Vert \leq 1$. The case $\left\Vert p \right\Vert = 1$ corresponds to $y = p$, which is the case where $d_{\mathbb{S}^2}(x,y) = \frac{\pi}{2}$. Thus, we see that for this cost function, mass is not allowed to move more than a distance of $\pi/2$.



We can also demonstrate the problem with some other surfaces, where the computation of the Riemannian distance on the manifold is not trivial computationally, for example the case of the $2$-torus $\mathbb{T}^2 \subset \mathbb{R}^3$ equipped with the standard Riemannian metric. While the cost function $c(x,y) = \frac{1}{2} d_{\mathbb{R}^3}(x,y)^2$ cannot be strictly written as a simply a function of the Riemannian distance on the manifold $M$, it shares the same property of defective cost functions in that it cannot transport mass beyond a certain distance. Although we have the $2$-torus in mind, it should be very clear how this argument extends to other surfaces and manifolds embedded in higher dimensions. The mapping satisfies:

\begin{equation}
p = -\nabla_{x} c(x,y) = y - x.
\end{equation}

Therefore, taking the dot product with respect to $p$ and $q$, respectively, we get:

\begin{align}
\left\Vert p \right\Vert &= y \cdot p - x \cdot p, \\
0 &= y \cdot q - x \cdot q.
\end{align}

Therefore, we have:

\begin{equation}
y = \left(\left\Vert p \right\Vert + x \cdot p \right) \hat{p} + \left( x \cdot q \right) \hat{q} + (y \cdot \hat{n})\hat{n}
\end{equation}

Thus, the resulting mapping arising from Optimal Transport on the torus with the squared Euclidean cost satisfies:

\begin{equation}
y = x + p + (y \cdot \hat{n} - x \cdot \hat{n})\hat{n}.
\end{equation}

In general, directly solving for $y \cdot \hat{n}$ yields a very complicated formula even when a level-set formulation of the surface is known. For the $2$-torus, the computation of $y \cdot \hat{n}$ in general requires one to solve a quartic equation in $y \cdot \hat{n}$. However, a simple geometric argument renders the need to compute $y \cdot \hat{n}$ unnecessary. The problem of solving for the mapping $y$ is equivalent to the problem of finding a particular point of intersection of the parametrized line $y = x + p + \alpha \hat{n}$ and the torus, since $y \in M$. In general, we can easily find $p$ which cannot lead to an Optimal Transport mapping as follows. Take the tangent plane $\mathcal{T}_x$ and orthogonally project the set $\mathbb{T}^2 \subset \mathbb{R}^3$ onto $\mathcal{T}_x$. That is, define $S: \mathbb{R}^3 \rightarrow \mathcal{T}_x$ by $S(z) = (1-\hat{n} \hat{n}^T) z $. Then, denote $T' = S(\mathbb{T}^2)$. For a fixed $x \in M$, a value of $p$ that is not allowed is one for which $x+p \notin T'$.

Even more generally, if $c(x,y) = f(d^2_{\mathbb{R}^3}(x,y))$, then the problem of finding $y$, then we have:

\begin{equation}\label{eq:general2}
p = -\nabla_{x} c(x,y) = f'(d^2_{\mathbb{R}^3}(x,y)) (y-x).
\end{equation}

Note that, by taking the magnitude of both sides, we get:

\begin{equation}
\frac{\left\Vert p\right\Vert}{f'(d^2_{\mathbb{R}^3}(x,y))} = d_{\mathbb{R}^3}(x,y).
\end{equation}

Therefore, taking the dot product of Equation~\eqref{eq:general2} with respect to $p$ and $q$, respectively, we get:

\begin{align}
\frac{\left\Vert p \right\Vert}{f'(d^2_{\mathbb{R}^3}(x,y))} &= y \cdot p - x \cdot p, \\
0 &= y \cdot q - x \cdot q.
\end{align}

Therefore, we have:

\begin{align}
y &= \left(\frac{\left\Vert p \right\Vert}{f'(d^2_{\mathbb{R}^3}(x,y))}+ x \cdot p \right) \hat{p} + \left( x \cdot q \right) \hat{q} + (y \cdot \hat{n})\hat{n} \\
y &= \left(\beta(\left\Vert p \right\Vert)+ x \cdot p \right) \hat{p} + \left( x \cdot q \right) \hat{q} + (y \cdot \hat{n})\hat{n}
\end{align}

Thus, the resulting mapping arising from Optimal Transport on the torus with the squared Euclidean cost satisfies:

\begin{equation}
y = \frac{\beta(\left\Vert p \right\Vert)}{\left\Vert p \right\Vert} p + x + (y \cdot \hat{n} - x \cdot \hat{n})\hat{n},
\end{equation}
where $z = \beta(\left\Vert p \right\Vert)$ which is found by inverting the equation $\left\Vert p \right\Vert = zf'(z^2)$, when it is invertible.
 
Thus, finding the mapping $y$ is equivalent to the problem of finding the closest point (to $x$) of intersection of the line $y = \frac{\beta(\left\Vert p \right\Vert)}{\left\Vert p \right\Vert} p + x + \alpha \hat{n}$ with the torus. Thus, we can see that no matter how $\beta$ increases as a function of $\left\Vert p \right\Vert$, it is not possible for the mapping to reach around to the ``other side" of the torus. Thus, we see that all such cost functions built from Euclidean distances are defective. This, of course, does not apply to the case of the sphere (on which the logarithmic cost function $c(x,y) = -2\log d_{\mathbb{R}^3}(x,y)$, for example, is not defective), since in that case, $x \cdot p = x \cdot q = 0$.

In the case where, \textit{a priori}, the mapping is known to be close to identity, a Euclidean cost can be used as an expedient for rapid computations, for example using the Sinkhorn iterations proposed by Cuturi~\cite{Cuturi}. It should be emphasized that this is not simply a claim that asymptotically the Euclidean distance approximates the Riemannian distance. Rather, the Euclidean distance can be used as a substitute distance up to a certain point.


\subsection{Derivation of the Mixed Hessian Term $\left\vert D^2_{xy} c(x,y) \vert_{y = T(x)} \right\vert$}

The mixed Hessian term $\left\vert D^2_{xy} c(x,y) \vert_{y = T(x)} \right\vert$ appearing in Equation~\eqref{eq:thePDE} is important to compute to check the MTW conditions as well as for numerical discretizations. See, for example, the paper~\cite{HT_OTonSphere2} for an example of a numerical discretization that uses the explicit form of the mixed Hessian. In this subsection, we derive various expressions for computing the mixed Hessian for cost functions leading to maps of exponential type. The expressions we derive, which are Equations~\eqref{eq:mH},~\eqref{eq:mHalt} and~\eqref{eq:mH2}, are for mappings of exponential type, which arise from cost functions depending solely on terms involving the dot product $x \cdot y$, see Section~\ref{sec:expo}, however, the derivation can be easily generalized to other, more general, cost functions also arising in optics applications and will be explored in future work.

Here we derive a formula for the mixed Hessian via an integral formulation. Using the notation established in Equation~\eqref{eq:r1r2}, define $R(p) = pR_2(\left\Vert p \right\Vert)$. Then, for a region $E \subset T_{x}\mathbb{S}^2$ define $U = R(E) = \left\{ pR_2(\left\Vert p \right\Vert) \vert p \in E \right\}$ and unit vectors $\hat{u}_1, \hat{u}_2$, where $\hat{u}_1 = \hat{p}$ and $\hat{u}_2 = x \times \hat{p}$. For any vector $v \in T_{x}\mathbb{S}^2$, this defines a coordinate system on $T_{x}\mathbb{S}^2$, i.e. $v = (u_1, u_2)$, where $u_1 = v \cdot \hat{u}_1$ and $u_2 = v \cdot \hat{u}_2$. Then, define the region $T(x,E) = \left\{ \sqrt{1 - u_1^2 - u_2^2} \vert (u_1, u_2) \in U \right\} \subset \mathbb{S}^2$, see Figure~\ref{fig:formula1}.

\begin{figure}[htp]
	\centering
	\includegraphics[width=\textwidth]{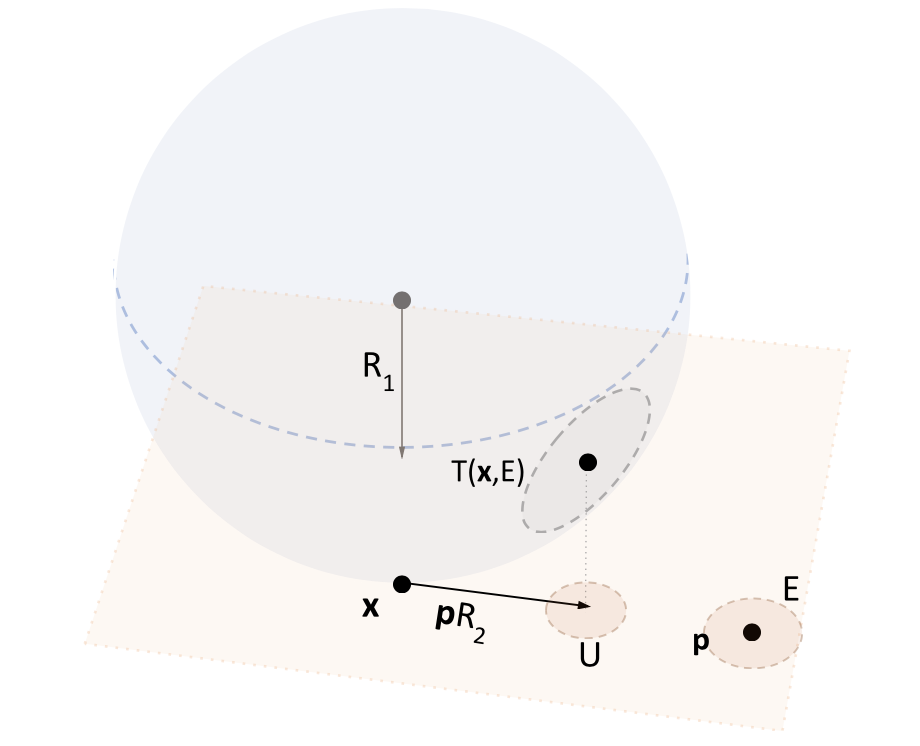}
	\caption{Change in area formula from tangent coordinates $p$ to coordinates on the sphere $T(x,p) = xR_1(\left\Vert p \right\Vert) + p R_2(\left\Vert p \right\Vert)$ via the coordinates $(u_1, u_2)$ of the orthogonal projection of $T(x,p)$ onto the tangent plane $T_{x}\mathbb{S}^2$.}
	\label{fig:formula1}
\end{figure}

Since the mixed Hessian satisfies the formula

\begin{equation}\label{eq:rel1}
\left\vert \det D^{2}_{xy} c \right\vert = \frac{1}{\left\vert \det D_{p} T \right\vert},
\end{equation}
we see that if we compute the quantity $\left\vert \det D_{p} T \right\vert$, then we can compute the determinant of the mixed Hessian. We see that the quantity $\left\vert \det D_{p} T \right\vert$ can be computed via computing the area of the region $T(x,E)$. From Figure~\ref{fig:formula1}, we see:

\begin{equation}\label{eq:rel2}
\int_{T(x,E)} dS = \int_{E} \left\vert \det D_{p}T \right\vert dp.
\end{equation}
Also,

\begin{equation}
\int_{T(x,E)} dS = \int_{U} \frac{1}{\sqrt{1- \left\Vert u \right\Vert^2}} du,
\end{equation}

\begin{equation}\label{eq:rel3}
= \int_{E} \frac{1}{\sqrt{1 - \left\vert R(p) \right\vert^2}} \left\vert \det \nabla R(p) \right\vert dp.
\end{equation}

And therefore, by Equations~\eqref{eq:rel1},~\eqref{eq:rel2} and~\eqref{eq:rel3}, we get the following expression for the mixed Hessian:

\begin{equation}
\left\vert \det D^2_{xy}c \right\vert = \frac{\sqrt{1 - \left\vert R(p) \right\vert^2}}{\left \vert \det \nabla R(p) \right\vert}.
\end{equation}

Since $R(p) = pR_2(\left\Vert p \right\Vert)$, we compute:

\begin{align}
(\nabla R)_{11} &= \frac{p_1^2}{\left\Vert p \right\Vert} R_2'(\left\Vert p \right\Vert) + R_2(\left\Vert p \right\Vert), \\
(\nabla R)_{12} &= (\nabla R)_{21} = \frac{p_1 p_2}{\left\Vert p \right\Vert} R_2'(\left\Vert p \right\Vert), \\
(\nabla R)_{22} &= \frac{p_2^2}{\left\Vert p \right\Vert} R_2'(\left\Vert p \right\Vert) + R_2(\left\Vert p \right\Vert),
\end{align}
and thus,

\begin{equation}
\det \left( \nabla R \right) = \left\Vert p \right\Vert R_2(\left\Vert p \right\Vert) R_2'(\left\Vert p \right\Vert) + R_2(\left\Vert p \right\Vert)^2,
\end{equation}
and hence

\begin{equation}\label{eq:mH}
\left\vert \det D^{2}_{xy} c \right\vert = \frac{\sqrt{1 - \left\Vert p \right\Vert^2 R_2(\left\Vert p \right\Vert)^2}}{\left\vert \left\Vert p \right\Vert R_2(\left\Vert p \right\Vert) R_2'(\left\Vert p \right\Vert) + R_2(\left\Vert p \right\Vert)^2 \right\vert}.
\end{equation}

We find that there is a more convenient expression for the determinant of the mixed Hessian in terms of $R_1(\left\Vert p \right\Vert)$. First, we use the fact that:

\begin{equation}
R_1(\left\Vert p \right\Vert)^2 + \left\Vert p \right\Vert^2 R_2(\left\Vert p \right\Vert)^2 = 1,
\end{equation}
and thus, we get $\left\vert R_1(\left\Vert p \right\Vert) \right\vert = \sqrt{1 - \left\Vert p \right\Vert^2 R_2(\left\Vert p \right\Vert)^2}$ and, also,

\begin{equation}
R_1(\left\Vert p \right\Vert)R_1'(\left\Vert p \right\Vert) + \left\Vert p \right\Vert R_2(\left\Vert p \right\Vert)^2 + \left\Vert p \right\Vert^2 R_2(\left\Vert p \right\Vert) R_2'(\left\Vert p \right\Vert) = 0,
\end{equation}
and thus,


\begin{equation}
\left\vert \left\Vert p \right\Vert R_2(\left\Vert p \right\Vert) R_2'(\left\Vert p \right\Vert)+R_2(\left\Vert p \right\Vert)^2 \right\vert = \frac{\left\vert  R_1(\left\Vert p \right\Vert) R_1'(\left\Vert p \right\Vert)\right\vert}{\left\Vert p \right\Vert}.
\end{equation}

Thus, in terms of $R_1$ we get the very simple expression:

\begin{equation}\label{eq:mHalt}
\left\vert \det D^{2}_{xy} c \right\vert = \frac{\left\Vert p \right\Vert}{\left\vert R_1'(\left\Vert p \right\Vert) \right\vert}.
\end{equation}

Using Equation~\eqref{eq:mH}, we derive a formula for the mixed Hessian term in terms of the function $\beta$ and $F'(x \cdot y)$ and $G''(d_{\mathbb{S}^2}(x,y))$. 





From the definition $T(x,p) = xR_1(\left\Vert p \right\Vert) + p R_2(\left\Vert p \right\Vert)$ and the fact that $\beta(\left\Vert p \right\Vert) = \arccos(R_1)$, we get that $\beta(\left\Vert p \right\Vert) = \arcsin(\left\Vert p \right\Vert R_2(\left\Vert p \right\Vert))$. Therefore, since

\begin{equation}
\beta'(\left\Vert p \right\Vert) = \frac{R_2(\left\Vert p \right\Vert) + xR_2'(\left\Vert p \right\Vert)}{\sqrt{1-\left\Vert p \right\Vert^2R_2(\left\Vert p \right\Vert)^2}},
\end{equation}
we get,

\begin{equation}
\left\vert \beta'(\left\Vert p \right\Vert) \right\vert = \frac{1}{\left\vert R_2(\left\Vert p \right\Vert) \right\vert}\left\vert D_p T \right\vert,
\end{equation}
from~\eqref{eq:mH}. Therefore,

\begin{equation}
\left\vert D_p T \right\vert = \left\vert R_2(\left\Vert p \right\Vert) \right\vert \left\vert \beta'(\left\Vert p \right\Vert) \right\vert = \frac{\sin \beta(\left\Vert p \right\Vert)}{\left\Vert p \right\Vert} \left\vert \beta'(\left\Vert p \right\Vert) \right\vert,
\end{equation}
and thus, we have the following expression for the mixed Hessian in terms of the $\beta$ function:

\begin{equation}\label{eq:mH2}
\left\vert D^2_{xy} c(x,y) \right\vert = \frac{\left\Vert p \right\Vert}{\sin \left( \beta(\left\Vert p \right\Vert) \right) \left\vert \beta'(\left\Vert p \right\Vert) \right\vert} 
\end{equation}

This expression, while cumbersome to compute by hand, proffers a much simpler route for checking the MTW conditions for all defective cost functions. Provided that we satisfy $z \in [0, z^{*})$, the mixed Hessian is strictly positive.

\begin{lemma}
For a defective cost function $c(x,y)$, when $z$ satisfies $z < z^{*}$, we have

\begin{equation}
\left\vert D^2_{xy} c(x,y) \right\vert > 0.
\end{equation}
\end{lemma}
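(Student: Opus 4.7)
The plan is to substitute the relations that characterize defective cost functions directly into the formula~\eqref{eq:mH2} for the mixed Hessian determinant and then argue positivity of each factor. Starting from
\begin{equation*}
\left\vert D^2_{xy} c(x,y) \right\vert = \frac{\left\Vert p \right\Vert}{\sin(\beta(\left\Vert p \right\Vert)) \, \left\vert \beta'(\left\Vert p \right\Vert) \right\vert},
\end{equation*}
I would use the three key identities derived in the proof of Theorem~\ref{thm:exponential}, namely $z = \beta(\left\Vert p \right\Vert)$, $\left\Vert p \right\Vert = \left\vert G'(z) \right\vert$, and $\left\vert \beta'(\left\Vert p \right\Vert) \right\vert = 1/\left\vert G''(z) \right\vert$, to rewrite the mixed Hessian purely in terms of the function $G$, yielding
\begin{equation*}
\left\vert D^2_{xy} c(x,y) \right\vert = \frac{\left\vert G'(z) \right\vert \, \left\vert G''(z) \right\vert}{\sin z}.
\end{equation*}

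Next I would argue that each factor on the right is well-defined and nonzero on the interior $z \in (0, z^{*})$. Since $z^{*} \leq \pi$, the denominator $\sin z$ is strictly positive there. By definition of a defective cost function, $G''(z)$ has a constant (strictly positive or strictly negative) sign on $[0, z^{*})$, so $\left\vert G''(z) \right\vert > 0$ on this interval. Finally, because $G'$ is strictly monotone on $[0, z^{*})$ by the constant sign of $G''$, and because $G'(0) = 0$ (which follows from $G(z) = F(\cos z)$ and $\sin 0 = 0$), $G'(z) \neq 0$ for $z \in (0, z^{*})$. This handles all interior values.

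The main subtlety will be the boundary case $z = 0$, where both the numerator $|G'(z)|$ and the denominator $\sin z$ vanish, giving $0/0$. Here I would use $G'(z) = -F'(\cos z)\sin z$ to cancel the $\sin z$ explicitly, producing
\begin{equation*}
\left\vert D^2_{xy} c(x,y) \right\vert = \left\vert F'(\cos z) \right\vert \, \left\vert G''(z) \right\vert,
\end{equation*}
and then invoke the hypotheses from Definition~\ref{def:defective}: $F'(\zeta) \neq 0$ on $(\cos z^{*}, 1)$ and $\lim_{\zeta \to 1} F'(\zeta) \neq 0$. These together guarantee $F'(\cos z) \neq 0$ for every $z \in [0, z^{*})$, which closes the argument uniformly on the closed-open interval. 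The only real obstacle is bookkeeping the two sign cases ($F'>0$ versus $F'<0$, equivalently the two forms of~\eqref{eq:Gdouble1}--\eqref{eq:Gdouble2}), but the absolute values render this irrelevant.
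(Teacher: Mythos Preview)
Your proposal is correct and follows essentially the same approach as the paper: both reduce formula~\eqref{eq:mH2} to the product $\left\vert G''(z)\right\vert\left\vert F'(x\cdot y)\right\vert$ and then invoke the hypotheses in Definition~\ref{def:defective} to conclude positivity. The only cosmetic difference is that the paper routes the simplification through $R_2(\left\Vert p\right\Vert)=1/F'(x\cdot y)$ rather than through $G'(z)=-F'(\cos z)\sin z$, and you treat the $z=0$ boundary case more explicitly than the paper does.
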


\begin{proof}
Let $z = d_{\mathbb{S}^2}(x,y)$. Then, we derive the important equality:
\begin{equation}\label{eq:mixedHessianresult}
\left\vert D^2_{xy} c(x,y) \right\vert = \frac{\left\Vert p \right\Vert}{\sin \left( \beta(\left\Vert p \right\Vert) \right) \left\vert \beta'(\left\Vert p \right\Vert) \right\vert} = \frac{\left\vert G''(z) \right\vert}{\left\vert R_2(\left\Vert p \right\Vert) \right\vert} = \left\vert G''(z) \right\vert \left\vert F'(x \cdot y)\right\vert,
\end{equation}
since $y \cdot \hat{p} = \frac{\left\Vert p \right\Vert}{F'(x \cdot y)} = \left\Vert p \right\Vert R_2 \left( \left\Vert p \right\Vert \right)$, therefore, $R_2(\left\Vert p \right\Vert) = \frac{1}{F'(x \cdot y)}$. Since $\frac{1}{\left\vert \beta'(\left\Vert p \right\Vert) \right\vert} = \left\vert G''(z) \right\vert \neq 0$ for $z \in [0, z^{*})$, and $F'(\zeta) \neq 0$ for $\zeta \in (\cos z^{*}, 1)$ and $\lim_{\zeta \rightarrow 1}F'(\zeta) \neq 0$, we get that $\left\vert D^2_{xy} c(x,y) \right\vert > 0$.
\end{proof}


\begin{remark}\label{rmk:mHEuclidean}
A similar computation can be done for the Euclidean case for defective cost functions, see Remark~\ref{rmk:Euclidean} for the background and assumptions, i.e. $J'(\zeta) \neq 0$ and $H''(z) \neq 0$. It can be shown that for the $\mathbb{R}^d$ for such cost functions, we get:

\begin{equation}
\left\vert D_{p} T \right\vert = \beta ' (\left\Vert p \right\Vert) \left( \frac{\beta(\left\Vert p \right\Vert)}{\left\Vert p \right\Vert} \right)^{d}.
\end{equation}
Let $\zeta = \frac{1}{2}\left\Vert x - y \right\Vert^2$. By Equation~\eqref{eq:formula1} we get $J'(\zeta) z = \left\Vert p \right\Vert$ implies $J'(\zeta) = \left\Vert p \right\Vert / \beta(\left\Vert p \right\Vert)$. Using Equation~\eqref{eq:relationgeneralized}, we get:

\begin{equation}
\left\vert D^2_{xy} c(x,y) \right\vert =\frac{1}{\beta ' (\left\Vert p \right\Vert)} \left( \frac{\left\Vert p \right\Vert}{\beta(\left\Vert p \right\Vert)} \right)^{d} = \left\vert H''(z) \right\vert \left\vert J'(\zeta) \right\vert^{d},
\end{equation}
which is the natural equivalent of Equation~\eqref{eq:mixedHessianresult} and shows that the mixed Hessian term is nonzero for cost functions in Euclidean space such that $J'(\zeta) \neq 0$ and $H''(z) \neq 0$.
\end{remark}

\subsection{Cost-Sectional Curvature for Defective Cost Functions}\label{sec:costsectionalcurvature}

In this subsection, we present simple general formulas for checking the positivity of the cost-sectional curvature for cost functions of the type $c(x,y)= F(x \cdot y)$ on the sphere and $c(x,y) = J\left(\frac{1}{2} \left\Vert x - y \right\Vert^2 \right)$ on subsets of Euclidean space. These formulas, interestingly, will only depend just the first- and second-order derivatives of $F$ and $J$, respectively, (and an expression of $x\cdot y$ in terms of $\left\Vert p \right\Vert$) even though the cost-sectional curvature tensor requires taking four derivatives.

We will then use the formulas we derive to check the cost-sectional curvature for various cost functions, including the cost from the lens reflector problem I: $c(x,y) = -\log(n - x \cdot y)$ and the cost from the lens refractor problem II: $c(x,y) = \log(\kappa x \cdot y - 1)$. We will confirm that the cost function for the lens reflector problem I does not satisfy the positive cost-sectional curvature condition, but does for the lens reflector problem II, as was found in~\cite{gutierrezhuang}. However, we emphasize that the formulas we derive will be valid for any defective cost function and as such extend beyond the lens refractor problem.

The cost-sectional curvature condition comes from defining the following fourth-order tensor, which was first defined in~\cite{MTW}, but we will use the formula from~\cite{LoeperReg}. Note: there is a minus sign in front of the cost function and that the derivatives with respect to $x$ need to be taken with respect to the metric on either $\mathbb{S}^2$ or $\mathbb{R}^d$, respectively. Here, we define the cost-sectional curvature tensor:

\begin{definition}\label{def:costsectionalcurvature}
On the domain that the map $y \mapsto -\nabla_{x} c(x,y)$ is injective, $\left\vert D^2_{xy} c(x,y) \right\vert \neq 0$ and $c(x,y) \in C^4$, we define the map

\begin{equation}
\mathfrak{G}_{c} (\xi, \eta) = D^4_{p_{k}, p_{l}, x_{i}, x_{j}} \left[ (x, p) \rightarrow -c(x, y)(p) \vert_{y = T(x)} \right] \xi_{i} \xi_{j} \eta_{k} \eta_{l}.
\end{equation}
\end{definition}

An important condition to check for the purposes of regularity theory is positive sectional curvature, usually known as the $\text{Aw}$ condition, i.e. that on an appropriate domain for all $x \in M$, $y \in M$, $\xi \in T_{x} M$, $\eta \in T_{x} M$, $\xi \perp \eta$ that:

\begin{equation}
\mathfrak{G}_{c}(\xi, \eta) \geq 0.
\end{equation}

The $\text{Aw}$ condition is an important condition to check such that smooth source and target mass density functions which are bounded away from zero and infinity lead to Optimal Transport mapping and potential functions that are smooth as well. Without the \text{Aw} condition, it is possible to have smooth smooth source and target mass density functions which are bounded away from zero and infinity and possibly have an Optimal Transport mapping that is not even continuous, see~\cite{LoeperReg} and also~\cite{figalli}.

A more stringent condition is known as the $\text{As}$ condition, which is often the condition used in deriving more precise regularity results, as in ~\cite{Loeper_OTonSphere}. It requires that there exist a constant $C_0>0$ such that

\begin{equation}
\mathfrak{G}_{c}(\xi, \eta) \geq C_0 \left\vert \xi \right\vert^2 \left\vert \eta \right\vert.
\end{equation}


\subsubsection{Computation of the cost-sectional curvature for $c(x,y) = F(x \cdot y)$ on the sphere $\mathbb{S}^2$}\label{sec:cscsphere}

To compute the Hessian on $\mathbb{S}^2$, we first compute the $3 \times 3$ matrix Hessian for the cost function $c(x,y) = F(x \cdot y)$ using Euclidean derivatives, since our metric on the sphere is induced by the surrounding Euclidean space. Given a function $f: \mathbb{R}^3 \rightarrow \mathbb{R}$, we compute the Hessian as follows:

\begin{equation}
\nabla^{2}_{xx} f(x) = D^2_{xx} f(x) - (D_{x} f(x) \cdot x) \text{Id},
\end{equation}
where $D$ are the standard Euclidean (ambient) derivatives. We will thus compute the term $\nabla^{2}_{xx} c(x,y) \vert_{y = T}$. We get:

\begin{equation}
D_{x} F(x \cdot y) \cdot x = F'(x \cdot y) x \cdot y = -\frac{\left\Vert p \right\Vert}{\sin \beta (\left\Vert p \right\Vert)} \cos \beta(\left\Vert p \right\Vert) = -\left\Vert p \right\Vert \cot \beta(\left\Vert p \right\Vert).
\end{equation}

We also compute:

\begin{equation}
\frac{\partial^2}{\partial x_{i} x_{j}} F(x \cdot y) = F''(x \cdot y) y_{i} y_{j}.
\end{equation}

Thus, we get:

\begin{equation}
\nabla^{2}_{xx} F(x \cdot y) = F''(x \cdot y) \begin{pmatrix} y_{1}^2 & y_1y_2 & y_1 y_3 \\ y_{1}y_2 & y_2^2 & y_2 y_3 \\ y_{1}y_3 & y_2y_3 & y_3^2 \end{pmatrix} - F'(x \cdot y) (x \cdot y) \begin{pmatrix} 1 & 0 & 0 \\ 0 & 1 & 0 \\ 0 & 0 & 1 \end{pmatrix}.
\end{equation}


Recall that $y_{i} = x_i (x \cdot y) + \frac{p_i}{F'(x \cdot y)}$. This Hessian can be entirely expressed as a function of $x, p$, however, for simplicity, we express in terms of $x, \zeta, p$, where $\zeta = x \cdot y = \cos \beta(\left\Vert p \right\Vert)$:

\begin{equation}\label{eq:HessSphere}
\left( \nabla^{2}_{xx} c(x, y) \vert_{y = T} \right)_{kl} = F''(\zeta) \left(x_{k} \zeta + \frac{p_{k}}{F'(\zeta)}  \right)\left(x_{l} \zeta + \frac{p_{l}}{F'(\zeta)}  \right) - \zeta F'(\zeta) \delta_{kl},
\end{equation}

Since $\xi$ and $p$ can be related via $\zeta = \cos \beta(\left\Vert p \right\Vert)$, we relabel Equation~\eqref{eq:HessSphere} by $f_1(p) = F''(\zeta) \zeta^2$, $f_2(p) = F''(\zeta)/(F'(\zeta))^2$, $f_3(p) = \zeta/ F'(\zeta)$, and $f_4(p) = -\zeta F'(\zeta)$ and get

\begin{equation}
\left( \nabla^{2}_{xx} c(x, y) \vert_{y = T} \right)_{kl} = f_1(p) x_{k}x_{l} + f_2(p) p_{k}p_{l} + f_3(p) (x_kp_l + x_l p_k) + f_4(p) \delta_{kl}.
\end{equation}

With this simplification of notation, we proceed and compute:

\begin{multline}
D_{p_{i}}\left( \nabla^{2}_{xx} c(x, y) \vert_{y = T} \right)_{kl} = D_{p_{i}}f_1(p) x_{k}x_{l} + D_{p_{i}}f_2(p) p_{k}p_{l} + f_2(p)(\delta_{ik} p_{l} + \\\delta_{il}p_{k}) + D_{p_{i}}f_3(p) (x_kp_l + x_l p_k) + f_3(p)(x_{k} \delta_{il} + x_{l} \delta_{ik}) + D_{p_{i}}f_4(p) \delta_{kl}.
\end{multline}

Taking another derivative, we get:

\begin{multline}
D_{p_{i}p_{j}}\left( \nabla^{2}_{xx} c(x, y) \vert_{y = T} \right)_{kl} = D_{p_{i}p_{j}}f_1(p) x_{k}x_{l} + D_{p_{i}p_{j}} f_2(p) p_k p_l + D_{p_{i}} f_2(p) (\delta_{jk}p_l + \delta_{jl}p_{k}) + \\
D_{p_{j}} f_2(p) (\delta_{ik}p_{l} + \delta_{il}p_{k}) + f_2(p) (\delta_{ik}\delta_{jl} + \delta_{il}\delta_{jk}) + D_{p_{i}p_{j}}f_3(p)(x_k p_l + x_l p_k) + \\ D_{p_{i}}f_3(p) (x_k \delta_{jl} + x_{l} \delta_{jk}) + 
D_{p_{j}} f_3(p)(x_k \delta_{il} + x_{l} \delta_{ik}) + D_{p_{i}p_{j}}f_4(p) \delta_{kl}.
\end{multline}

Now, we compute the following:

\begin{multline}
\sum_{i, j, k, l} D_{p_{i}p_{j}}\left( \nabla^{2}_{xx} c(x, y) \vert_{y = T} \right)_{kl} \xi_{i} \xi_{j} \eta_{k} \eta_{l} = \left\langle D^2_{pp}f_1(p) \xi, \xi \right\rangle (x \cdot \eta)^2 + \left\langle D^2_{pp}f_2(p) \xi, \xi \right\rangle (p \cdot \eta)^2 + \\
4(D_{p} f_2(p) \cdot \xi)(\xi \cdot \eta)(p \cdot \eta) + f_2(p) (\xi \cdot \eta)^2 + 2\left\langle D^2_{pp}f_3(p) \xi, \xi \right\rangle (x \cdot \eta)(p \cdot \eta) + 2(D_{p} f_3(p) \cdot \xi)(x \cdot \eta)(\xi \cdot \eta) + \\
2(D_{p}f_3(p) \cdot \xi)(\xi \cdot \eta)(x \cdot \eta) + 
\left\langle D^2_{pp}f_4(p) \xi, \xi \right\rangle \left\vert \eta \right\vert^2.
\end{multline}

We now use the fact that $\xi \perp \eta = 0$ and $x \cdot \eta = 0$:

\begin{multline}
\sum_{i, j, k, l} D_{p_{i}p_{j}}\left( \nabla^{2}_{xx} c(x, y) \vert_{y = T} \right)_{kl} \xi_{i} \xi_{j} \eta_{k} \eta_{l} =  \left\langle D^2_{pp}f_2(p) \xi, \xi \right\rangle (p \cdot \eta)^2 +  \left\langle D^2_{pp}f_4(p) \xi, \xi \right\rangle \left\vert \eta \right\vert^2.
\end{multline}

This shows that in order to satisfy condition $\text{As}$, we must check two conditions, (1) the function $f_4(p)$ must be strictly negative definite (strictly concave as a function of $\left\Vert p \right\Vert$) and (2) $f_2(p) + f_4(p)$ must be strictly negative definite (strictly concave as a function of $\left\Vert p \right\Vert$). In order to satisfy condition $\text{Aw}$, we simply weaken the strictly negative definite condition to be just simply negative definite.

We can use these formulas to check the cost-sectional curvature conditions $\text{Aw}$ and $\text{As}$ for various cost functions on the sphere:

\begin{enumerate}
\item For the far-field refractor problem I, we replace $f_2(p) = F''(x \cdot y)/(F'(x \cdot y))^2$ and $f_4(p) = -(x \cdot y) F'(x \cdot y)$ back in, we can now check the lens problem: 

\begin{equation}
f_4(p) = -(x \cdot y) F'(x \cdot y) = -\frac{x \cdot y}{n - x \cdot y},
\end{equation}
and

\begin{equation}
f_2(p) = F''(x \cdot y)/(F'(x \cdot y))^2 = -1.
\end{equation}

Recall from Equation~\eqref{eq:lensmapping} that

\begin{equation}
x \cdot y = \frac{\sqrt{1 + (1-n^2) \left\Vert p \right\Vert^2}+n \left\Vert p \right\Vert^2}{1 + \left\Vert p \right\Vert^2}.
\end{equation}

The result, for different $n$ is shown in Figure~\ref{fig:f4}. All are convex. This means that the lens problem does not satisfy condition $\text{Aw}$, corroborating the result in~\cite{gutierrezhuang}.

\begin{figure}
\includegraphics[width=\textwidth]{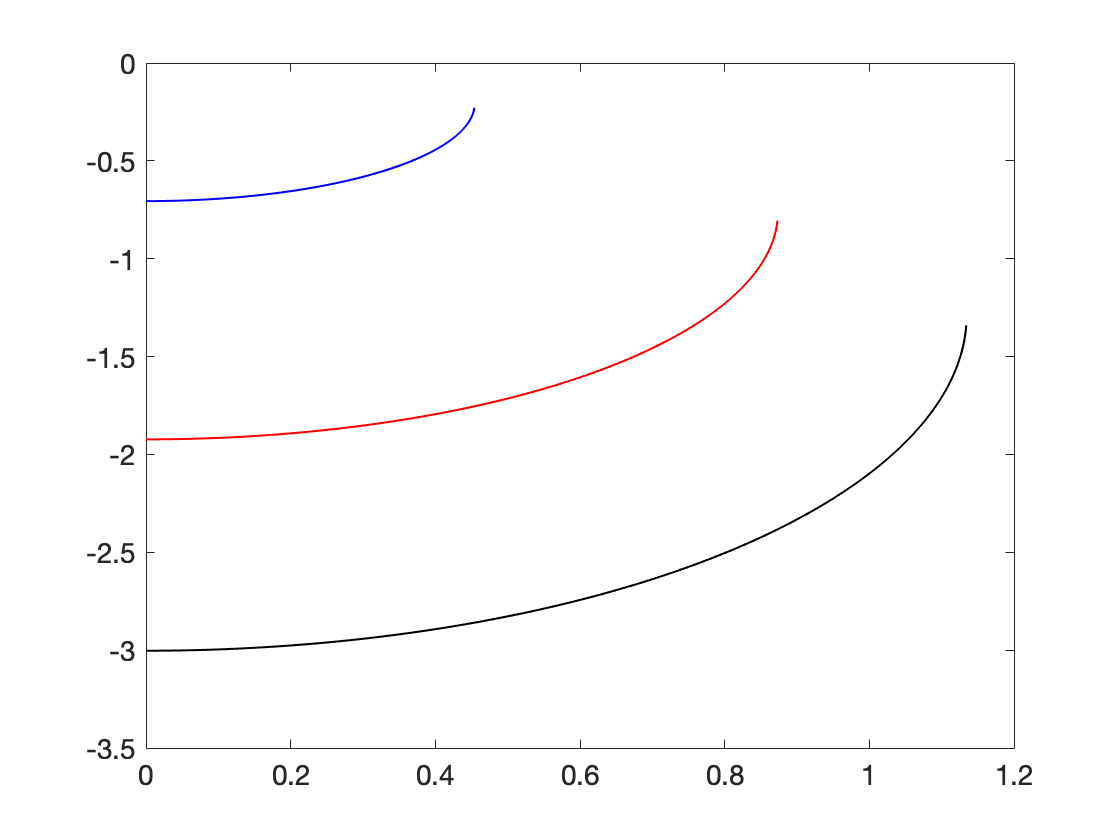}
\caption{The function $f_4(p)$ plotted as a function of $\left\Vert p \right\Vert$. Black indicates $n=1.333$, approximately the refractive index of water, red indicates $n=1.52$, approximately the refractive index of glass, and blue indicates $n=2.417$, approximately the refractive index of diamond.}\label{fig:f4}
\end{figure}

\item For the far-field refractor problem II, we compute:

\begin{equation}\label{eq:kappaf4}
f_4(p) = -(x \cdot y) F'(x \cdot y) = -\frac{\kappa x \cdot y}{\kappa (x \cdot y) - 1},
\end{equation}
and

\begin{equation}
f_2(p) = F''(x \cdot y)/(F'(x \cdot y))^2 = -1.
\end{equation}

Then, we use Equation~\eqref{eq:kappamapping} to get:

\begin{equation}\label{eq:kappaxy}
x \cdot y = \frac{\sqrt{\kappa^2+(\kappa^2-1)\left\Vert p \right\Vert^2} + \left\Vert p \right\Vert^2}{\kappa(1 + \left\Vert p \right\Vert^2)}.
\end{equation}

The result, in Figure~\ref{fig:kappaf4} shows that $f_4(p)$ is concave in $\left\Vert p \right\Vert$ and thus satisfies condition $\text{Aw}$. In order to check $\text{As}$, we need to compute the second derivative of the function in Equation~\eqref{eq:kappaf4} with respect to $\left\Vert p \right\Vert$ using Equation~\eqref{eq:kappaxy}. This is a straightforward exercise, and a similar computation was done in~\cite{gutierrezhuang}, where it was shown that condition $\text{As}$ does not hold as $\left\Vert p \right\Vert \rightarrow \infty$. However, if we restrict $\left\Vert p \right\Vert < p^{*}$, then $f_{4}(p)$ is strictly convex and thus $\text{As}$ can be satisfied.

\begin{figure}
\includegraphics[width=\textwidth]{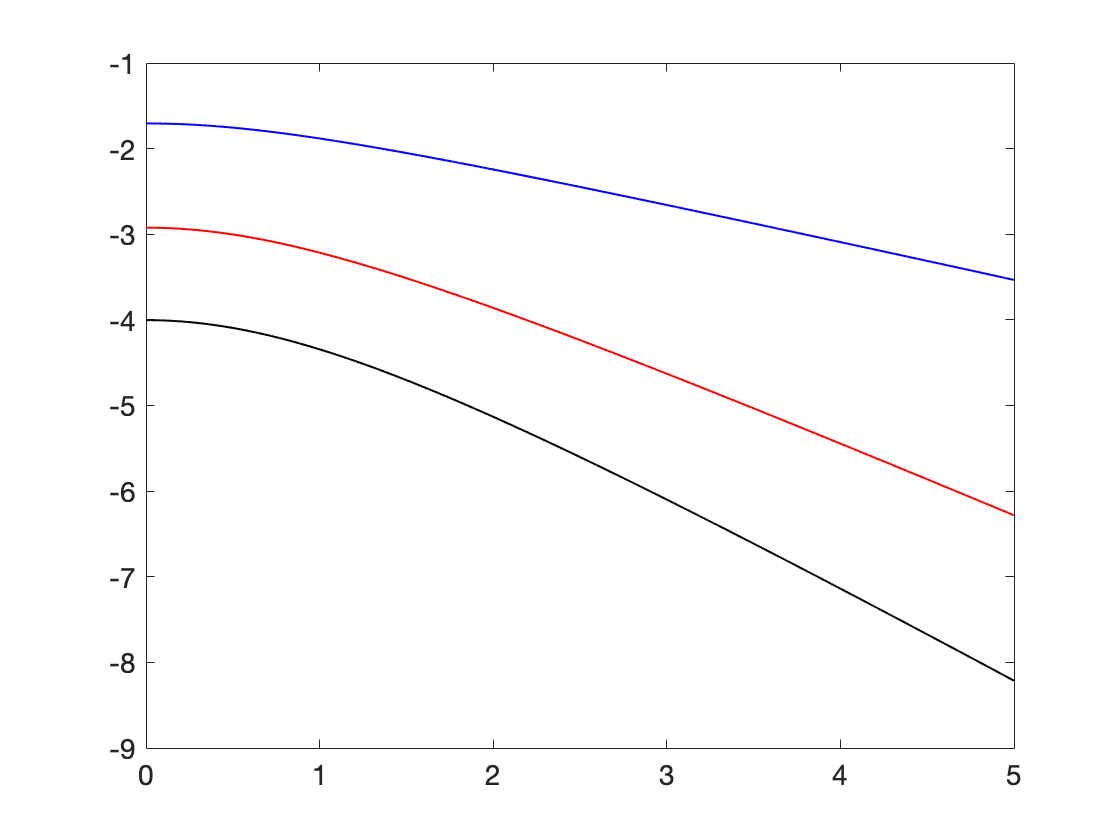}
\caption{The function $f_4(p)$ plotted as a function of $\left\Vert p \right\Vert$. Black indicates $\kappa=1.333$, red indicates $\kappa=1.52$, and blue indicates $\kappa=2.417$.}\label{fig:kappaf4}
\end{figure}

\item For the squared geodesic cost $c(x,y) = \frac{1}{2} d_{\mathbb{S}^2}(x,y)^2$, we have that $F(x \cdot y) = \frac{1}{2} \arccos(x \cdot y)^2$. Therefore, we get:

\begin{equation}
f_4(p) = -(x \cdot y) F'(x \cdot y) = \frac{(x \cdot y) \arccos(x \cdot y)}{\sqrt{1-(x \cdot y)^2}} = \frac{\left\Vert p \right\Vert \cos \left\Vert p \right\Vert}{\sqrt{1 - \cos^2 \left\Vert p \right\Vert}},
\end{equation}
which is concave in $\left\Vert p \right\Vert$. Also,

\begin{equation}
f_2(p) = F''(x \cdot y)/(F'(x \cdot y))^2 = \frac{1}{(\arccos(x \cdot y))^2} - \frac{x \cdot y}{\arccos(x \cdot y)\sqrt{1-(x \cdot y)^2}},
\end{equation}
which is convex in $\left\Vert p \right\Vert$, but the sum $f_2(p) + f_4(p)$ is concave in $\left\Vert p \right\Vert$ as expected. Checking the condition $\text{As}$ is a straightforward exercise (take two derivatives), and the results from~\cite{Loeper_OTonSphere} show that the condition $\text{As}$ is satisfied for the squared geodesic cost function.
\end{enumerate}

\subsubsection{Computation of the cost-sectional curvature for $c(x,y) = F(x \cdot y)$ for Euclidean space $\mathbb{R}^d$}\label{sec:cscEuclidean} We can easily check the cost-sectional curvature conditions $\text{Aw}$ and $\text{As}$ in Euclidean space. This can be contrasted with the previous computation on the sphere in Section~\ref{sec:cscsphere}. Denote $\zeta =  \frac{1}{2}\left\Vert x - y \right\Vert^2$. We get:

\begin{equation}
\frac{\partial}{\partial x_{i}} c(x,y) = J' \left( \zeta \right) (x_i - y_i),
\end{equation}
and thus
\begin{equation}
\frac{\partial^2}{\partial x_{i} \partial x_{j}} c(x,y) = J''(\zeta) (x_i - y_i)(x_j - y_j),
\end{equation}
and
\begin{equation}
\frac{\partial^2}{\partial x_{i}^2} c(x,y) = J''(\zeta) (x_i - y_i)^2 + J'(\zeta),
\end{equation}
and thus,

\begin{multline}\label{eq:HessEuclidean}
\left( \nabla^{2}_{xx} c(x, y) \vert_{y = T} \right)_{kl} = J'( \zeta) \delta_{kl} + \\
J''(\zeta) \begin{pmatrix} (x_1-y_1)^2 & (x_1 - y_1)(x_2 - y_2) & (x_1 - y_1)(x_3 - y_3) \\ (x_1 - y_1)(x_2 - y_2) & (x_2 - y_2)^2 & (x_2 - y_2)(x_3 - y_3) \\ (x_1 - y_1)(x_3 - y_3) & (x_2 - y_2)(x_3 - y_3) & (x_3 - y_3)^2 \end{pmatrix}.
\end{multline}

Since $x - y = -\frac{p}{J'(\zeta)}$, we get:

\begin{equation}\label{eq:HessEuclidean}
\left( \nabla^{2}_{xx} c(x, y) \vert_{y = T} \right)_{kl} = J'( \zeta) \delta_{kl} - \frac{d}{d \zeta} \left( \frac{1}{J'(\zeta)} \right) p_{k}p_{l}.
\end{equation}

Since we can relate $p$ and $\zeta$ via the equation $\left\Vert p \right\Vert = J'(\zeta) \sqrt{2\zeta}$, we may rewrite Equation~\eqref{eq:HessEuclidean} as:

\begin{equation}
\left( \nabla^{2}_{xx} c(x, y) \vert_{y = T} \right)_{kl} = f_1(p) \delta_{kl} + f_2(p) p_{k}p_{l},
\end{equation}
where $f_1(p) = J'(\zeta)$ and $f_2(p) = \frac{J''(\zeta)}{J'(\zeta)^2}$. Thus, we can compute:

\begin{multline}
D_{p_i, p_j} \left( \nabla^{2}_{xx} c(x, y) \vert_{y = T} \right)_{kl} = D_{p_{i} p_{j}}f_1(p)\delta_{kl} + D_{p_{i}p_{j}}f_2(p)p_{k}p_{l} + \\
D_{p_{i}}f_{2}(p) (\delta_{jk}p_{l} + \delta_{jl}p_{k}) + D_{p_{j}}f_2(p)(\delta_{ik}p_{l}+ \delta_{il}p_{k}) + f_2(p)(\delta_{ik}\delta_{jl}+\delta_{il}\delta_{jk}).
\end{multline}

Thus, we compute

\begin{multline}
\sum_{i, j, k, l} D_{p_i, p_j} \left( \nabla^{2}_{xx} c(x, y) \vert_{y = T} \right)_{kl} \xi_{i} \xi_{j} \eta_{k} \eta_{l} = \left\langle D_{pp}f_1(p)\xi, \xi \right\rangle \left\vert \eta \right\vert^2 + \left\langle D_{pp} f_2(p) \xi, \xi \right\rangle (p \cdot \eta)^2 + \\
4(D_{p} f_2(p) \cdot \xi)(p \cdot \eta)(\xi \cdot \eta)  + 2f_2(p)(\xi \cdot \eta)^2.
\end{multline}

Since $\xi \perp \eta$, we get:

\begin{equation}
\sum_{i, j, k, l} D_{p_i, p_j} \left( \nabla^{2}_{xx} c(x, y) \vert_{y = T} \right)_{kl} \xi_{i} \xi_{j} \eta_{k} \eta_{l} = \left\langle D_{pp}f_1(p)\xi, \xi \right\rangle \left\vert \eta \right\vert^2 + \left\langle D_{pp} f_2(p) \xi, \xi \right\rangle (p \cdot \eta)^2.
\end{equation}

As in the case of the sphere in Section~\ref{sec:cscsphere}, in order for condition $\text{As}$ to hold, it is necessary for two properties to hold: (1) that $f_1$ is strictly concave in $\left\Vert p \right\Vert$ and that (2) $f_1 + f_2$ is strictly concave in $\left\Vert p \right\Vert$. For property $\text{Aw}$ to hold, we weaken strict concavity to simply concavity. We have

\begin{equation}
f_1(p) = J'(\zeta) = \frac{\left\Vert p \right\Vert}{\beta(\left\Vert p \right\Vert)},
\end{equation}
and
\begin{equation}
f_2(p) = \frac{J''(\zeta)}{J'(\zeta)^2} = \frac{1}{\left\Vert p \right\Vert} \left( \frac{1}{\left\Vert p \right\Vert \beta'(\left\Vert p \right\Vert)} - \frac{1}{\beta(\left\Vert p \right\Vert)} \right).
\end{equation}

As an example, for the squared Euclidean distance cost, $J(\zeta) = \zeta = \frac{1}{2}\left\Vert p \right\Vert^2$, for which $J'(\zeta) = 1$, which is concave in $\left\Vert p \right\Vert$. Also, $J''(\zeta) = 0$, so we see that we satisfy the cost-sectional curvature condition $\text{Aw}$, but $\text{As}$ does not hold, which is corroborated by~\cite{TW}.

\begin{remark}
Some explicit computations for more general Riemannian manifolds have been done in, for example,~\cite{figalli} where there is an interesting example where for the squared geodesic cost function $c(x,y) = \frac{1}{2}d_{M}(x,y)^2$, the cost-sectional curvature is not positive for an ellipsoid of revolution. For general cost functions and general Riemannian manifolds, the formula for the cost-sectional curvature tensor is formidable, see, for example the survey paper by Villani~\cite{villanisurvey}.
\end{remark}

\section{Regularity and Solvability}\label{sec:regularity}
\subsection{Ma-Trudinger-Wang Conditions for Defective Cost Functions}\label{sec:MTW}

It was proved, in~\cite{gutierrezhuang} that there exist weak solutions to the lens refractor problem I \& II, provided that, most importantly, the conditions in Equations~\eqref{eq:lensrefractorconstraint} and~\eqref{eq:restriction2} were met, respectively. In~\cite{Karakhanyan}, it was proved that smooth solutions exist for the lens refractor problem II, with an argument using supporting ellipsoids and hyperboloids, respectively. Here, instead, we are inspired to take the route taken in~\cite{Loeper_OTonSphere} and prove a regularity result for the lens refractor problem II, but, more generally, for any defective cost function that satisfies the MTW conditions.

We begin by stating the MTW conditions, formulated originally in~\cite{MTW}, but we focus on the Riemannian generalization as stated in~\cite{Loeper_OTonSphere}. Given a compact domain $D \subset \mathbb{S}^2 \times \mathbb{S}^2$, denote by $\pi_1: \mathbb{S}^2 \times \mathbb{S}^2 \mapsto \mathbb{S}^2$, the projection $\pi_1(x, y) = x$ and its inverse $\pi_{1}^{-1}(x) = \left\{ x \right\} \times \mathbb{S}^2$. For any $x\in \pi_{1}(D)$, we denote by $D_{x}$ the set $D \cap \pi_{1}^{-1}(x)$. Then, we introduce the following conditions:

\begin{hypothesis}
\begin{itemize}
\item [$\mathbf{A0}$] The cost function $c$ belongs to $C^{4}(D)$.
\item [$\mathbf{A1}$] For all $x \in \pi_{1}(D)$, the map $y \rightarrow -\nabla_{x}c(x,y)$ is injective on $D_{x}$.
\item [$\mathbf{A2}$] The cost function $c$ satisfies $\det D^{2}_{xy}c \neq 0$ for all $(x,y)$ in $D$.
\item [$\mathbf{Aw}$] The cost-sectional curvature is non-negative on $D$. That is, for all $(x,y) \in D$, for all $\xi, \eta \in T_{x}\mathbb{S}^2$, $\xi \perp \eta$, 
\begin{equation}
\mathfrak{G}_{c}(x, y)(\xi, \eta) \geq 0
\end{equation}
\item [$\mathbf{As}$] The cost-sectional curvature is uniformly positive on $D$. That is, for all $(x,y) \in D$, for all $\xi, \eta \in T_{x}\mathbb{S}^2$, $\xi \perp \eta$, 
\begin{equation}
\mathfrak{G}_{c}(x, y)(\xi, \eta) \geq C_0 \left\vert \xi \right\vert^2 \left\vert \eta \right\vert^2
\end{equation}
\end{itemize}
\end{hypothesis}

Denoting $z^{*} = \left\{ \min z: G''(z)=0 \ \text{or} \ G''(z) = \infty, z \in (0,\pi) \right\}$, we now choose the subdomain $D$ as follows. For each $x \in \mathbb{S}^2$, we denote the corresponding geodesic ball $B_{x}(z^{*})$. Then, let $0<\gamma<z^{*}$
\begin{equation}
D_{\gamma} = \cup_{x} \left\{ x \right\} \times B_{x}(\gamma)
\end{equation}

Now that we have defined $D_{\gamma}$, the computations of Section~\ref{sec:computations} allow for us to verify the MTW conditions. Assuming that $F \in C^4([\cos \gamma, 1])$, $G \in C^4([0, \gamma])$, then, $c$ satisfies $\mathbf{A0}$ on $D_{\gamma}$. As long as $c$ satisfies the hypotheses in Theorem~\ref{thm:exponential}, then $\mathbf{A1}$ is satisfied on $D_{\gamma}$. By the computation in Equation~\eqref{eq:mixedHessianresult}, if we have $F'(\zeta) \neq 0$ and $G''(z) \neq 0, \infty$, then $c$ satisfies $\mathbf{A2}$ on $D_{\gamma}$. As shown in Section~\ref{sec:costsectionalcurvature}, if $f_4(p)$ is concave and $f_4(p) + f_2(p)$ is concave for $\left\Vert p \right\Vert < p^{*}$, then $\mathbf{Aw}$ is satisfied for $c$ on $D_{\gamma}$. The condition $\mathbf{As}$ can then be checked by taking two derivatives of $f_2(p)$ and $f_4(p)$ with respect to $\left\Vert p \right\Vert$.

\begin{remark}
Based on Remarks~\ref{rmk:Euclidean},~\ref{rmk:defectiveEuclidean},~\ref{rmk:mHEuclidean}, and the work in Section~\ref{sec:cscEuclidean}, the MTW conditions can be checked for a cost function on Euclidean space in an analogous subdomain $D_{\gamma} \subset \mathbb{R}^d \times \mathbb{R}^d$.
\end{remark}

\subsubsection{Example of Computations for Cost Functions in Theorem 4.1 of Loeper~\cite{Loeper_OTonSphere}: Power Costs on the Unit Sphere $c(x,y) = \frac{1}{p}d_{\mathbb{S}^2}(x,y)^p$}

As remarked in Section~\ref{sec:introduction}, an important result of the work in this paper is to allow for the MTW conditions to be checked easily for a wide class of cost functions, not just defective cost functions. Note that if $z^{*} \geq \pi$, then our cost functions naturally fit into Theorem 4.1 of Loeper~\cite{Loeper_OTonSphere}, which are the cost functions that can be checked with the preexisting regularity theory of Loeper. The benefit is that in Section~\ref{sec:computations} we have identified simple conditions on the cost function which allow for the hypotheses of Theorem 4.1 of Loeper (including the MTW conditions) to be verified. As an illustration of this, we verify the MTW conditions for the power cost functions on the sphere: $c(x,y) = \frac{1}{s}d_{\mathbb{S}^2}(x,y)^s = \frac{1}{s} \arccos(x \cdot y)^s$.

\begin{enumerate}
\item First, we check that the power cost functions satisfy some preliminary conditions from Theorem 4.1 in Loeper~\cite{Loeper_OTonSphere}. That is, $G(z)$ is smooth and strictly increasing with $G'(0) = 0$. We immediately verify that $G(z) = \frac{1}{s} z^{s}$ is smooth and strictly increasing for $s>0$ and $G'(z) = z^{s-1}$, so that means that if $s>1$ we have that $G'(0) = 0$. From now on assume that $s>1$.

\item Now, we verify the MTW conditions on $D = \mathbb{S}^2 \times \mathbb{S}^2 \setminus \text{antidiag}$ using the formulas from this manuscript. Clearly, $\mathbf{A0}$ is satisfied on $D$.

\item In order for $\mathbf{A1}$ to be satisfied, we check the hypotheses of Theorem~\ref{thm:exponential}. We need $F'(\zeta) \neq 0$ for $\zeta \in (-1,1]$ and $G''(z)$ strictly positive or negative on $(0, \pi)$. Now, $F(\zeta) = \frac{1}{s} \arccos(\zeta)^s$. Therefore, $F'(\zeta) = -\arccos(\zeta)^{s-1}/\sqrt{1-\zeta^2}$. Thus, we have $F'(\zeta) \neq 0$ for $\zeta \leq 1$ if $s\leq 2$. Since $G''(z) = (s-1)z^{s-2}$, we have that $G''(z)$ is strictly positive on $(0, \pi)$ for $s>1$. Therefore, condition $\mathbf{\text{A1}}$ is satisfied for $1<s\leq2$.

\item The condition $\mathbf{A2}$, by Equation~\eqref{eq:mixedHessianresult} is satisfied for $1<s\leq2$.

\item In order to verify condition $\mathbf{As}$, we compute the term $R_1(\left\Vert p \right\Vert)$ from Equation~\eqref{eq:r1r2}. This is simple, using the fact that $\beta$ and $G'$ are inverses and therefore $\left\Vert p \right\Vert = G'(z) = z^{s-1}$. Therefore, $z = \left\Vert p \right\Vert^{1/(s-1)}$. Therefore, we get $R_1(\left\Vert p \right\Vert) = \cos \left( \left\Vert p \right\Vert^{\frac{1}{s-1}} \right)$. We must check that $f_4(p)$ and $f_2(p) + f_4(p)$ are strictly convex as a function of $\left\Vert p \right\Vert$, as shown in Section~\ref{sec:costsectionalcurvature}. We have

\begin{equation}
f_4(p) = -\zeta F'(\zeta) = \frac{\zeta \arccos(\zeta)^{s-1}}{\sqrt{1-\zeta^2}}
\end{equation}
where $\zeta = x \cdot y$. Thus,

\begin{equation}
f_4(p)=\frac{ \left\Vert p \right\Vert \cos\left( \left\Vert p \right\Vert^{\frac{1}{s-1}} \right)}{ \sqrt{1-\cos^2\left( \left\Vert p \right\Vert^{\frac{1}{s-1}} \right)}}, \ \ \ \left\Vert p \right\Vert \in [0, \pi^{s-1}]
\end{equation}
and
\begin{multline}
f_2(p) = F''(\zeta)/(F'(\zeta))^2 = \\
\left\Vert p \right\Vert^{\frac{s}{s-1}} \left( (s-1) - \left\Vert p \right\Vert^{\frac{1}{s-1}} \frac{\cos \left( \left\Vert p \right\Vert^{\frac{1}{s-1}} \right)}{\sqrt{1-\cos^2 \left( \left\Vert p \right\Vert^{\frac{1}{s-1}} \right)}} \right), \ \ \ \left\Vert p \right\Vert \in [0, \pi^{s-1}]
\end{multline}

\begin{equation}
= (s-1)\arccos(\zeta)^{-s} - \arccos(\zeta)^{2-2s}f_4(p)
\end{equation}

\begin{equation}
= \frac{s-1}{\left\Vert p \right\Vert^{\frac{s}{s-1}}} - \frac{1}{\left\Vert p \right\Vert^2}f_4(p)
\end{equation}
and therefore,

\begin{equation}
f_2(p) + f_4(p) = \frac{s-1}{\left\Vert p \right\Vert^{\frac{s}{s-1}}} + \frac{\left\Vert p \right\Vert^2-1}{\left\Vert p \right\Vert^2}f_4(p)
\end{equation}

This can be used to confirm, for example, that condition $\mathbf{As}$ does not hold for $s>2$, since for small values of $\left\Vert p \right\Vert$, we have:

\begin{equation}
f_2(p) + f_4(p) = (s-2)\left\Vert p \right\Vert^{\frac{-s}{s-1}} + o \left( \left\Vert p \right\Vert^{\frac{-s}{s-1}} \right)
\end{equation}
so the cancellation of the highest order term only happens when $s=2$. The fact that the power costs for $s \neq2$ do not satisfy the $\mathbf{As}$ condition agrees with the result for Euclidean power costs in~\cite{TW}.
\end{enumerate}

\subsection{Conditions on Source and Target Mass Density Functions that Constrain the Mapping}

Now that we have verified the MTW conditions on a subdomain $D_{\gamma}$, we need to make sure that our source and target densities $f$ and $g$ are such that they do not require mass to move too far, as was the case in the example in Section~\ref{sec:illposedness}. The work in~\cite{Loeper_OTonSphere} showed that certain technical conditions on the source and target mass distributions restricted mass to move away from the cut locus on the sphere. Therefore, in that paper, the MTW conditions were shown to be satisfied on the subdomain $D \subset \mathbb{S}^2 \times \mathbb{S}^2$, where the domain $D$ was chosen to be $D = \mathbb{S}^2 \times \mathbb{S}^2 \setminus \text{antidiag}$, where the set $\text{antidiag}$ is defined as $(x, -x) \in \mathbb{S}^2 \times \mathbb{S}^2$.

In this section, we show that more strict conditions can be found on the source and target mass distributions that restrict the Optimal Transport mapping to satisfy $d_{\mathbb{S}^2}(x,T) \leq \gamma$ for any desired $\gamma < \pi$, where the Optimal Transport mapping $T$ arises from certain defective cost functions, such as the cost function arising in the lens refractor problem. This then allows us to use the regularity framework in~\cite{Loeper_OTonSphere} to build a regularity theory for a large class of defective cost functions.

Define $G(\phi)(x) = \text{exp}_{x} (\nabla \phi(x))$, which is the Optimal Transport mapping arising from the squared geodesic cost function. Also define $T(\phi)(x) = \text{exp}_{x} \left( \frac{\nabla \phi(x)}{\left\Vert \nabla \phi(x) \right\Vert} \beta \left( \nabla \phi(x) \right) \right)$ where $\beta$ is the function in Theorem~\ref{thm:exponential} arising from a defective cost function. 
As before, let $z^{*}$ denote the smallest value for which $G''(z) = 0$ or $G'(z) = \infty$. First, we begin with a result due to Delano\"{e} and Loeper~\cite{LoeperDelanoe}.

\begin{theorem}[Delano\"{e} and Loeper]\label{thm:dl}
Let $\phi :\mathbb{S}^2 \rightarrow \mathbb{R}$ be a $C^3$ functions such that $G(\phi)$ is a diffeomorphism and let $\rho: \mathbb{S}^2 \rightarrow \mathbb{R}$ be the positive $C^1$ functions defined by:

\begin{equation}
G(\phi)_{\#} \text{dVol} = \rho \text{dVol},
\end{equation}
where $\text{dVol}$ is the standard $2$-form on the unit sphere $\mathbb{S}^2$ induced by the standard Euclidean metric on $\mathbb{R}^3$. Then,

\begin{equation}
\max_{\mathbb{S}^2} \left\vert \nabla \phi \right\vert \leq C \max_{\mathbb{S}^2} \left\vert \nabla [\rho^{-1}] \right\vert.
\end{equation}
\end{theorem}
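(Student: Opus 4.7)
The plan is to prove this gradient estimate by applying the maximum principle to $|\nabla\phi|^2$ on the compact sphere, exploiting the Monge-Amp\`ere structure implicit in $G(\phi)_{\#}\text{dVol} = \rho\,\text{dVol}$.

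First, I would unpack the push-forward relation into a PDE. Since $T(x) := G(\phi)(x) = \exp_x(\nabla\phi(x))$ satisfies $\rho(T(x)) \det(DT(x)) = 1$, differentiating the exponential map using the Jacobi field equation on $\mathbb{S}^2$ produces a Monge-Amp\`ere-type equation of the form
\begin{equation}
\det\bigl(\nabla^2\phi(x) + A(x,\nabla\phi(x))\bigr) = \frac{B(x,\nabla\phi(x))}{\rho(T(x))},
\end{equation}
where $A$ and $B$ are explicit tensors encoding the sphere's curvature through sine and cosine factors of the transport distance $d_{\mathbb{S}^2}(x,T(x)) = |\nabla\phi(x)|$. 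The $c$-convexity of $\phi$ guarantees $\nabla^2\phi + A \geq 0$.

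Next, let $x_0 \in \mathbb{S}^2$ be a point where $|\nabla\phi|^2$ attains its maximum. The first-order condition forces $\nabla^2\phi(x_0)\cdot\nabla\phi(x_0) = 0$ (up to connection corrections), so that $\nabla\phi(x_0)$ is a degenerate direction of the Hessian at $x_0$. The second-order condition $\Delta(|\nabla\phi|^2)(x_0) \leq 0$, combined with differentiating the Monge-Amp\`ere equation along $\nabla\phi$, should then convert second-order information about $\phi$ at $x_0$ into a bound involving $\nabla_y[\rho^{-1}]$ evaluated at $T(x_0)$. Since that directional derivative is bounded by $|\nabla\phi(x_0)|\cdot \max_{\mathbb{S}^2}|\nabla[\rho^{-1}]|$, a rearrangement yields the stated estimate with an explicit constant $C$ depending only on the sphere geometry.

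The main obstacle will be controlling the curvature correction tensors $A$ and $B$. These contain factors such as $(\sin d)/d$ and $\cot d$, where $d = |\nabla\phi|$ is the transport distance, which degenerate as $d \to \pi$ due to antipodal focusing of geodesics on $\mathbb{S}^2$. The delicate technical work is to show that the $c$-convexity constraint $\nabla^2\phi + A \geq 0$ absorbs these singular factors at the maximum point cleanly enough that the resulting algebraic inequality remains linear in $|\nabla[\rho^{-1}]|$ with finite $C$. Managing this interaction between the sphere's positive curvature and the degeneracy of the linearized Monge-Amp\`ere operator at the maximum point is precisely the refinement over classical Pogorelov-type gradient estimates, and is the technical heart of the Delano\"e-Loeper argument.
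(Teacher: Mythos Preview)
The paper does not supply its own proof of this statement: Theorem~\ref{thm:dl} is quoted verbatim as ``a result due to Delano\"{e} and Loeper~\cite{LoeperDelanoe}'' and is used as a black box in the proof of Theorem~\ref{thm:bounds}. There is therefore nothing in the paper to compare your proposal against.

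That said, your outline is broadly in the spirit of the original Delano\"{e}--Loeper argument, which is indeed a maximum-principle estimate on $|\nabla\phi|^2$ exploiting the Monge--Amp\`ere structure and the positive curvature of $\mathbb{S}^2$. One caution: you invoke ``$c$-convexity of $\phi$'' to guarantee $\nabla^2\phi + A \geq 0$, but the hypothesis of the theorem as stated here is only that $\phi\in C^3$ and $G(\phi)$ is a diffeomorphism, not that $\phi$ is $c$-convex; you would need to extract the relevant positivity from the diffeomorphism assumption (orientation-preserving Jacobian) rather than from $c$-convexity directly. Also, your sketch of the first-order condition ``$\nabla^2\phi(x_0)\cdot\nabla\phi(x_0)=0$ (up to connection corrections)'' is correct in spirit, but the actual Delano\"{e}--Loeper computation is more delicate: the key step is not simply differentiating the PDE along $\nabla\phi$ but rather combining the first- and second-order extremality conditions with a Bochner-type identity and the Ricci curvature of $\mathbb{S}^2$ to force a cancellation that leaves only the $\nabla[\rho^{-1}]$ term. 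Your final paragraph correctly identifies the degeneracy as $d\to\pi$ as the crux, but be aware that the original argument handles this through the structure of the equation rather than through an absorption by $c$-convexity alone.
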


We continue by defining a $c$-convex function.

\begin{definition}
A function $\phi: \mathbb{S}^2: \rightarrow \mathbb{R}$ is $c$-convex if at each point $x \in \mathbb{S}^2$ there exists a point $y \in \mathbb{S}^2$ and a value $\phi^{c}(y)$ such that:

\begin{align}
-\phi^{c}(y) - c(x,y) &= \phi(x) \\
-\phi^{c}(y) - c(x',y) &\leq \phi(x'), \ \ \ \forall x' \in \mathbb{S}^2.
\end{align}
\end{definition}

\begin{theorem}\label{thm:bounds}
Let $\phi :\mathbb{S}^2 \rightarrow \mathbb{R}$ be a $C^3$, $c$-convex function, for the defective cost function $c$, such that $G(\phi)$  and $T(\phi)$ are diffeomorphisms and the Jacobian $\left\vert \nabla T \circ G^{-1} \right\vert$ has a bounded derivative and let $\tilde{\rho}: \mathbb{S}^2 \rightarrow \mathbb{R}$ be the positive $C^1$ function defined by:

\begin{equation}
T(\phi)_{\#} \text{dVol} = \tilde{\rho} \text{dVol}.
\end{equation}

Then, there exists a constant $\tilde{C}>0$ such that:

\begin{equation}
\max_{\mathbb{S}^2} \left\vert \nabla \phi \right\vert \leq \tilde{C} \left( \max \tilde{\rho} + \max \left\vert \nabla \tilde{\rho} \right\vert \right) .
\end{equation}
\end{theorem}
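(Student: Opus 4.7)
The plan is to reduce this estimate for the defective cost function $c$ to the Delanoë--Loeper estimate (Theorem \ref{thm:dl}) for the squared geodesic cost, by exploiting the fact that both mappings $G(\phi)$ and $T(\phi)$ move mass along the same geodesic starting at $x$, differing only in the distance travelled ($\|\nabla\phi(x)\|$ versus $\beta(\|\nabla\phi(x)\|)$). Concretely, I would introduce the auxiliary diffeomorphism
\begin{equation}
\widetilde{T} \;:=\; T(\phi) \circ G(\phi)^{-1} \colon \mathbb{S}^2 \to \mathbb{S}^2,
\end{equation}
so that $T(\phi) = \widetilde{T}\circ G(\phi)$. Under the hypothesis that $|\nabla T\circ G^{-1}|$ has a bounded derivative (and since $\widetilde T$ is a smooth diffeomorphism of the compact manifold $\mathbb{S}^2$), the Jacobian $J(y):=|\det\nabla \widetilde T(y)|$ is smooth, bounded above and bounded uniformly away from zero on $\mathbb{S}^2$, and $\nabla J$ is bounded as well.

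Next, I would relate the two densities $\rho$ and $\tilde\rho$. Writing the pushforward equalities pointwise,
\begin{equation}
\rho(G(\phi)(x))\,|\det\nabla G(\phi)(x)|=1,\qquad \tilde\rho(T(\phi)(x))\,|\det\nabla T(\phi)(x)|=1,
\end{equation}
and using $\nabla T(\phi)(x)=\nabla\widetilde T(G(\phi)(x))\,\nabla G(\phi)(x)$, we obtain
\begin{equation}
\rho(y) \;=\; \tilde\rho(\widetilde T(y))\,J(y)\qquad\text{for all } y\in\mathbb{S}^2.
\end{equation}
Hence $\rho^{-1}(y)=\bigl[\tilde\rho(\widetilde T(y))\,J(y)\bigr]^{-1}$, and a direct differentiation gives
\begin{equation}
\nabla\rho^{-1}(y) \;=\; -\frac{J(y)\,\nabla\tilde\rho(\widetilde T(y))\,\nabla\widetilde T(y)\;+\;\tilde\rho(\widetilde T(y))\,\nabla J(y)}{\bigl(\tilde\rho(\widetilde T(y))\,J(y)\bigr)^2}.
\end{equation}
Using the uniform upper bound on $J$, $\nabla J$, $\nabla\widetilde T$ and the uniform lower bound on $J$ and on $\tilde\rho$ (the latter coming from $\tilde\rho$ being the pushforward of the volume form under a diffeomorphism), one obtains constants $C_1,C_2>0$, depending only on these geometric quantities, such that
\begin{equation}
\max_{\mathbb{S}^2}|\nabla\rho^{-1}| \;\leq\; C_1 \max_{\mathbb{S}^2}\tilde\rho \;+\; C_2 \max_{\mathbb{S}^2}|\nabla\tilde\rho|.
\end{equation}

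Finally, I would apply Theorem \ref{thm:dl} of Delano\"e--Loeper to the $C^3$ function $\phi$, whose squared-geodesic pushforward density is exactly $\rho$, to obtain $\max|\nabla\phi|\leq C\max|\nabla\rho^{-1}|$. Composing this with the inequality above yields
\begin{equation}
\max_{\mathbb{S}^2}|\nabla\phi| \;\leq\; \widetilde C\bigl(\max_{\mathbb{S}^2}\tilde\rho+\max_{\mathbb{S}^2}|\nabla\tilde\rho|\bigr),
\end{equation}
which is the desired estimate. The main obstacle I anticipate is the careful bookkeeping needed to justify the bounded-Jacobian claims on $\widetilde T$ uniformly on $\mathbb{S}^2$; in particular one must verify that the lower bound on $J$ does not degenerate as $\|\nabla\phi\|\to p^\ast$ (which is ruled out by the assumption that $T(\phi)$ is a diffeomorphism and hence $\|\nabla\phi\|$ stays in the regime where $\beta'$ is bounded below), and that the constant $\widetilde C$ depends only on the cost function $c$ (through $\beta$ and its derivatives on the admissible range) and not on $\phi$ itself.
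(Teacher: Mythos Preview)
Your proposal is correct and follows essentially the same route as the paper: introduce the auxiliary diffeomorphism $S=T(\phi)\circ G(\phi)^{-1}$ (your $\widetilde T$), use the pushforward relation $\rho(y)=\tilde\rho(S(y))\,|\det\nabla S(y)|$ together with two--sided bounds on the Jacobian of $S$ to estimate $|\nabla\rho^{-1}|$ in terms of $\max\tilde\rho$ and $\max|\nabla\tilde\rho|$, and then invoke the Delano\"e--Loeper bound $\max|\nabla\phi|\le C\max|\nabla\rho^{-1}|$. Your write--up is in fact somewhat more careful than the paper's (you differentiate $\rho^{-1}$ directly and explicitly flag the issue of whether the constants in the Jacobian bounds for $\widetilde T$ depend on $\phi$), but the structure of the argument is identical.
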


\begin{proof}
Note: if $\rho = \tilde{\rho} = 1$, then $\phi$ is $c$-convex, $G(\phi)$ and $T(\phi)$ are diffeomorphisms and $\left\vert \nabla T \circ G^{-1} \right\vert$ has a bounded derivative.

From Theorem~\ref{thm:dl}, the $c$-convex function is $C^3$ and $G(\phi)$ is a diffeomorphism. Therefore, $\phi$ satisfies the bound:

\begin{equation}
\max_{\mathbb{S}^2} \left\vert \nabla \phi \right\vert \leq C \max_{\mathbb{S}^2} \left\vert =\nabla [\rho^{-1}] \right\vert.
\end{equation}
where $\rho$ is defined by:

\begin{equation}
G(\phi)_{\#} \text{dVol} = \rho \text{dVol},
\end{equation}


Since $T(\phi)$ is a diffeomorphism, it satisfies $0<\left\vert \nabla T(\phi)(x) \right\vert<\infty$. The function $\tilde{\rho}$ then satisfies $\tilde{\rho} = (T \circ G^{-1})_{\#} \rho$ and likewise $\rho = (G \circ T^{-1})_{\#} \tilde{\rho}$. We denote $S = T \circ G^{-1} (x) = \text{exp}_{x}\left( \frac{\nabla \phi(x)}{\left\Vert \nabla \phi(x) \right\Vert}\left( - \nabla \phi(x) + \beta \left( \nabla \phi(x) \right) \right) \right)$. 
We have then, that:

\begin{equation}
\tilde{\rho} = \rho(S^{-1}(x)) \left\vert \nabla S^{-1}(x) \right\vert.
\end{equation}

Since $S$ is invertible and a diffeomorphism, since it is the composition of diffeomorphisms, then we have $c \leq \left\vert \nabla S^{-1}(x) \right\vert \leq C$, and since $G$ is a diffeomorphism, we have $0<m \leq \rho \leq M$. Therefore, 

\begin{equation}
\tilde{\rho} \geq c m.
\end{equation}

Then, since

\begin{equation}
\rho = \tilde{\rho}(S(x)) \left\vert \nabla S(x) \right\vert,
\end{equation}
we get:

\begin{equation}
\rho \geq C \min \tilde{\rho} \geq cC m,
\end{equation}
and therefore,
\begin{equation}
\frac{1}{\rho} \leq \frac{1}{cC m}.
\end{equation}

Now, since $\left\vert \nabla S \right\vert$ has bounded derivatives, there exists a $\mu>0$ such that:

\begin{equation}
\left\vert \nabla \rho \right\vert = \left\vert \nabla S^{-1}_{\#} \tilde{\rho} \right\vert = \left\vert \nabla \left[ \tilde{\rho}(S(x)) \left\vert \nabla S(x) \right\vert \right] \right\vert
\end{equation}

\begin{equation}
= \left\vert \nabla \tilde{\rho}(S(x)) \left\vert \nabla S(x) \right\vert + \tilde{\rho}(S(x)) \nabla \left\vert \nabla S(x) \right\vert \right\vert \leq  \left\vert c \nabla \tilde{\rho}(S(x))+\mu \tilde{\rho}(S(x))  \right\vert,
\end{equation}
Therefore, we get:

\begin{equation}
\left\vert \nabla \phi \right\vert \leq \left\vert \nabla [\rho^{-1}] \right\vert \leq \tilde{C} \left( \max \tilde{\rho} + \max \left\vert \nabla \tilde{\rho} \right\vert \right).
\end{equation}
\end{proof}

This result shows that provided that we control the ratio of $g/f$ and the ratio of the derivative of $g/f$, then the derivative of the potential function $\phi$ can be controlled as well. This, in turn, via the fact that for a defective cost function, as long as $\left\Vert p \right\Vert < p^{*}$,  means that the Optimal Transport mapping will not transport a distance greater than $z^{*}$ from Definition~\ref{def:defective}. At this point, we can adapt the results of Loeper in ~\cite{Loeper_OTonSphere} to the case of defective cost functions to get the following result:

\begin{theorem}\label{thm:regularity}
Let $\mu$, $\nu$ be two $C^{\infty}$ probability measures that are strictly bounded away from zero and $u$ a $c$-convex potential function, where $c$ is a defective cost function, such that, defining $T(u)(x) = \text{exp}_{x} \left( \frac{\nabla u(x)}{\left\Vert \nabla u(x) \right\Vert} \beta(\left\Vert \nabla u(x) \right\Vert) \right)$, we have $T(u)_{\#}\mu = \nu$ and also the bound $\left\vert \nabla u \right\vert < p^{*}$, where $p^{*}$ is defined in Definition~\ref{def:defective}. Additionally, let $c$ satisfy assumption $\mathbf{As}$ on $D_{\gamma}$. Then $u \in C^{\infty}(\mathbb{S}^2)$.
\end{theorem}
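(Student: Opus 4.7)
The plan is to reduce the theorem to Loeper's regularity framework~\cite{Loeper_OTonSphere} by first using the gradient bound to confine the graph of $T(u)$ to a subdomain $D_{\gamma}$ on which the MTW conditions from Section~\ref{sec:MTW} all hold, and then bootstrapping via classical elliptic theory.

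First I would establish confinement. The hypothesis $\left\Vert \nabla u \right\Vert < p^{*}$, interpreted as a uniform bound (which is automatic since $u$ is Lipschitz on compact $\mathbb{S}^{2}$ by $c$-convexity), combined with the monotonicity of $\beta$ from Theorem~\ref{thm:exponential} and $\lim_{\left\Vert p \right\Vert \to p^{*}} \beta(\left\Vert p \right\Vert) = z^{*}$ from Definition~\ref{def:defective} and the discussion following it, yields some $\gamma < z^{*}$ with $d_{\mathbb{S}^{2}}(x, T(u)(x)) \leq \gamma$ for every $x$. Thus $(x, T(u)(x)) \in D_{\gamma}$ uniformly. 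The a priori compatibility of such a bound with $\mu, \nu$ smooth and bounded below is precisely the content of Theorem~\ref{thm:bounds}.

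Next I would verify the MTW conditions on $D_{\gamma}$: $\mathbf{A0}$ is immediate from smoothness of $c$ there; $\mathbf{A1}$ follows from the injectivity of $y \mapsto -\nabla_{x} c(x,y)$ that is implicit in the exponential-map structure of Theorem~\ref{thm:exponential}; $\mathbf{A2}$ is the non-vanishing mixed Hessian formula of Equation~\eqref{eq:mixedHessianresult}; and $\mathbf{As}$ is assumed. With these in place and the graph of $T(u)$ confined to $D_{\gamma}$, I would invoke Loeper's $C^{1,\alpha}$ continuity theorem from~\cite{Loeper_OTonSphere} to obtain $u \in C^{1,\alpha}(\mathbb{S}^{2})$ for some $\alpha \in (0,1)$. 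The essential observation is that Loeper's arguments use the cost function only along the graph of the Optimal Transport map, so the fact that $c$ degenerates outside $D_{\gamma}$ does not obstruct the argument.

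Once $C^{1,\alpha}$ regularity is established, PDE~\eqref{eq:thePDE} becomes uniformly elliptic with $C^{\infty}$ coefficients (since $c$ is smooth on $D_{\gamma}$) and $C^{\infty}$ right-hand side (from smoothness of $\mu$ and $\nu$ together with the lower bound). Standard Krylov--Evans estimates followed by Schauder iteration then bootstrap $u$ to $C^{\infty}(\mathbb{S}^{2})$. The hardest step will be verifying that Loeper's $C^{1,\alpha}$ machinery carries over on the restricted domain: one must check that the $c$-segments and $c$-subdifferentials used in~\cite{Loeper_OTonSphere} remain within $D_{\gamma}$, which I expect to follow from the confinement of the mapping combined with the strict cost-sectional curvature $\mathbf{As}$ and a possible further shrinking of $\gamma$ to accommodate the $c$-segment constructions.
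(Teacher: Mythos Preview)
Your proposal is correct but follows a genuinely different route from the paper. The paper's proof is very brief: it invokes Theorem~\ref{thm:bounds} for the gradient bound, then appeals directly to the $C^{2}$ a~priori estimate of Ma--Trudinger--Wang~\cite{MTW}, and closes via the method of continuity as in Section~5 of Loeper~\cite{Loeper_OTonSphere}. That is, the paper \emph{constructs} a smooth solution along a deformation path and identifies it with $u$ by uniqueness of the optimal map, rather than upgrading the regularity of the given $u$ step by step.

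Your route---confine to $D_{\gamma}$, verify $\mathbf{A0}$--$\mathbf{A2}$ and $\mathbf{As}$ there, apply Loeper's $C^{1,\alpha}$ theory, then bootstrap via Krylov--Evans and Schauder---is the ``weak-to-strong'' alternative. It is arguably more natural for the theorem as stated, since $u$ is given rather than to be constructed. The price you pay is exactly the issue you flag: Loeper's $C^{1,\alpha}$ machinery needs the $c$-subdifferentials and $c$-segments to remain inside $D_{\gamma}$, which requires an additional $c$-convexity-of-domain argument that the method-of-continuity route sidesteps (there one only needs the a~priori estimates to be uniform along the path, and the gradient bound keeps everything in $D_{\gamma}$ throughout). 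Conversely, the paper's approach leans on the MTW $C^{2}$ estimate as a black box and does not spell out why the continuity path stays within the regime $\lvert\nabla u\rvert < p^{*}$; your confinement argument makes that mechanism explicit.
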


\begin{proof}
The proof follows the line of reasoning in Section $5$ of Loeper~\cite{Loeper_OTonSphere}. First, we have shown by the estimate in Theorem~\ref{thm:bounds} that there exist potential functions whose derivatives can be bounded by $g/f$ and the derivative of $g/f$. Assuming that $\mathbf{As}$ is satisfied, we use a $C^2$ estimate established in~\cite{MTW} on $u$. Then, we use the method of continuity to construct smooth solutions for any $\mu$, $\nu$ for any smooth positive densities satisfying the hypotheses of Theorem~\ref{thm:bounds}.
\end{proof}

By applying the above theorem to the case of refractor problem II, and keeping in mind that the condition $\text{As}$ is only true for the cost function $c(x,y) = -\log(\kappa x \cdot y - 1)$ for all $\left\Vert p \right\Vert<p^{*}$, we arrive at the following result for the refractor problem II:

\begin{corollary}
Let $\mu$, $\nu$ be two $C^{\infty}$ probability measures that are strictly bounded away from zero. Then, the potential function $u$ for the refractor problem II is $C^{\infty}$.
\end{corollary}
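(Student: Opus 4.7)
The plan is to verify each hypothesis of Theorem~\ref{thm:regularity} for the refractor problem II cost $c(x,y)=\log(\kappa x\cdot y-1)$ and then invoke that theorem directly. First, this cost is defective of the second type with $z^{*}=\arccos(1/\kappa)<\pi$ and, since $G'(z^{*})=\infty$, with $p^{*}=\infty$. Hence the hypothesis $\left\Vert \nabla u \right\Vert<p^{*}$ of Theorem~\ref{thm:regularity} is automatically satisfied for any finite gradient, and existence of a $c$-convex weak potential $u$ with $T(u)_{\#}\mu=\nu$ is supplied by the construction of Gutierrez-Huang, since strict positivity of $\mu,\nu$ produces compatible data.

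Second, I would invoke the computation in Section~\ref{sec:costsectionalcurvature} showing that for this cost $f_{4}(p)$ is strictly concave in $\left\Vert p \right\Vert$ on $[0,\infty)$, and verify by two differentiations (as suggested there) that $f_{2}+f_{4}$ is likewise strictly concave. By the criterion established in Section~\ref{sec:cscsphere}, $\mathbf{As}$ therefore holds on every compact subdomain $D_{\gamma}$ with $\gamma<z^{*}$.

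Third, I would apply Theorem~\ref{thm:bounds} taking $\tilde{\rho}=g/f$. Since $\mu,\nu\in C^{\infty}$ are strictly bounded away from zero, both $\tilde{\rho}$ and $\nabla\tilde{\rho}$ are uniformly bounded, so Theorem~\ref{thm:bounds} yields a constant $P<\infty$ with $\left\Vert \nabla u \right\Vert_{L^{\infty}}\le P$. Because $\beta$ is strictly monotone and $\beta(\left\Vert p \right\Vert)\to z^{*}$ only in the limit $\left\Vert p \right\Vert\to\infty$, setting $\gamma:=\beta(P)<z^{*}$ confines the graph of $T$ to $D_{\gamma}$. With all hypotheses of Theorem~\ref{thm:regularity} met on this $D_{\gamma}$, the conclusion $u\in C^{\infty}(\mathbb{S}^{2})$ follows.

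The principal subtlety I expect is that Theorem~\ref{thm:bounds} presumes a priori that $u$ is $C^{3}$ and that $G(u),T(u)$ are diffeomorphisms, whereas Gutierrez-Huang only supplies a weak solution. I would close this gap by the method of continuity outlined in the proof of Theorem~\ref{thm:regularity}: deform from the trivial data $(\mathrm{dVol},\mathrm{dVol})$, for which $u\equiv 0$ is smooth and the diffeomorphism hypotheses hold, to $(\mu,\nu)$ along a smooth path of positive densities; the uniform gradient bound from Theorem~\ref{thm:bounds} propagates along the path, keeping the mapping in $D_{\gamma}$, and the $C^{2}$ Ma-Trudinger-Wang estimate (available once $\mathbf{As}$ holds on $D_{\gamma}$) provides the higher regularity needed to close the continuity argument and transfer smoothness back to the weak potential furnished by Gutierrez-Huang.
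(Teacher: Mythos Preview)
Your proposal is correct and follows the same route as the paper: the corollary is obtained by invoking Theorem~\ref{thm:regularity} after noting that the refractor~II cost is defective of the second type (so $p^{*}=\infty$ and the hypothesis $\lvert\nabla u\rvert<p^{*}$ is automatic) and that $\mathbf{As}$ holds on every $D_{\gamma}$ with $\gamma<z^{*}$. You are in fact more explicit than the paper about the gradient bound via Theorem~\ref{thm:bounds}, the choice $\gamma=\beta(P)$, and the $C^{3}$/diffeomorphism bootstrap issue, all of which the paper simply absorbs into the method-of-continuity argument inside the proof of Theorem~\ref{thm:regularity} without further comment.
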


\begin{remark}
One interesting consequence of Theorem~\ref{thm:regularity} is that it suggests that some lens refractor problems may be solved using a series of $N$ lenses $\left\{ L_{i} \right\}_{i=1, \dots, N}$, in the case that it is impossible to solve using a single lens. 

\end{remark}







\section{Conclusion}\label{sec:conclusion}
In examining the far-field lens refractor problem and its solvability condition, we found that we could answer many theoretical and computational questions by formulating a theory for defective cost functions on the unit sphere. Using these definitions, we examined when the MTW conditions held for such cost functions and derived formulas for verifying the cost-sectional curvature for defective cost functions. We discussed how to extend these ideas to Euclidean space and some ideas to general Riemannian manifolds. We examined a sufficient condition on the source and target densities $f$ and $g$, respectively, that ensured the mapping $T$ satisfied the solvability bound, which was achieved by bounding the ratio $g/f$ and the derivative of $g/f$ in the sup-norm. Using this, we could employ the regularity framework of Loeper in order to establish $C^{\infty}$ smoothness for the potential function for defective cost functions satisfying the strictly positive cost-sectional curvature condition.

\textbf{Acknowledgements}: I would like to especially thank Brittany Hamfeldt for introducing me to the lens refractor problem and working alongside for some of the initial exploratory computations. I would also like to thank Rene Cabrera for listening to and discussing the ideas as they developed.

\textbf{Data Availability Statement}: The author declares that the data supporting the findings of this study are available within the paper.

\bibliographystyle{plain}
\bibliography{ThreeSystems}


\end{document}